\documentclass[12pt]{amsart}
\pdfoutput=1
\usepackage[margin=1in]{geometry}
\usepackage[utf8]{inputenc}
\usepackage{graphicx}
\usepackage{amsmath,amssymb, cite}
\usepackage{amsthm}
\usepackage{color}
\usepackage{tikz}
\usepackage{pgfplots}
\usepackage{url}
\usepackage{mathrsfs}
\allowdisplaybreaks

\makeatletter
\def\l@section{\@tocline{1}{12pt plus2pt}{0pt}{}{\bfseries}}
\def\l@subsection{\@tocline{2}{0pt}{2pc}{2pc}{}}
\makeatother

\makeatletter
\def\subsection{\@startsection{subsection}{2}{\z@}%
	{-3.25ex\@plus -1ex \@minus -.2ex}%
	{1.5ex \@plus .2ex}%
	{\normalfont\bfseries\boldmath}}
\def\subsubsection{\@startsection{subsubsection}{3}%
	\z@{.5\linespacing\@plus.7\linespacing}{-.5em}%
	{\normalfont\bfseries\boldmath}}
\renewcommand\paragraph{\@startsection{paragraph}{4}{\z@}%
	{3.25ex \@plus1ex \@minus.2ex}%
	{-1em}%
	{\normalfont\normalsize\bfseries}}
\makeatother

\theoremstyle{plain}
\newtheorem{thm}{Theorem}[section]

\newtheorem{cor}[thm]{Corollary}
\newtheorem{lem}[thm]{Lemma}

\theoremstyle{definition}

\theoremstyle{remark}
\newtheorem{rem}[thm]{Remark}

\theoremstyle{plain}

\numberwithin{equation}{section}

\newtheorem{opques}[thm]{Open Question}

\newcommand{\thistheoremname}{}
\newtheorem{genericthm}[thm]{\thistheoremname}

\newtheorem*{genericthm*}{\thistheoremname}
\newenvironment{namedthm*}[1]
  {\renewcommand{\thistheoremname}{#1}%
   \begin{genericthm*}}
  {\end{genericthm*}}

\newcommand{\D}{{\mathbb D}}

\newcommand{\C}{{\mathbb C}}

\newcommand{\T}{{\mathbb T}}

\newcommand{\calQ}{{\mathcal Q}}

\newcommand{\HT}{\mathcal{T}}

\newcommand{\HD}{\mathcal{D}}

\def\one{\mbox{1\hspace{-4.25pt}\fontsize{12}{14.4}\selectfont\textrm{1}}}

\begin{document}

\title[]{On the holomorphic differential operator $\frac{d}{dz}: Q_K\to L^q(WdA)$ }

\author{Bingyang Hu}
\address{(Bingyang Hu) Department of Mathematics and Statistics\\
        Auburn University\\
        Auburn, Alabama, U.S.A, 36849}
\email{bzh0108@auburn.edu}

\author{Shuaibing Luo}
\address{(Shuaibing Luo) School of Mathematics\\
        Hunan University\\
        Changsha, Hunan, PR China, 410082}
\email{sluo@hnu.edu.cn}

\author{Jie Xiao}
\address{(Jie Xiao) Department of Mathematics and Statistics\\
        Memorial University of Newfoundland\\
        St. John's, NL, Canada, A1C 5S7}
\email{jxiao@mun.ca}

\author{Xiaojing Zhou}
\address{(Xiaojing Zhou) Department of Mathematics and Statistics\\
         Auburn University\\
         Auburn, Alabama, U.S.A, 36849}
\email{xiz0003@auburn.edu}

\begin{abstract}
In this paper, we obtain
non-testing characterizations, in terms of dyadic capacity gauges, of the
boundedness and compactness of the differentiation operator
$$
\frac{d}{dz}:Q_K\longrightarrow L^q(W\,dA),
\qquad 0<q<\infty.
$$
We also characterize the limiting case as $q\to0^+$, formulated in terms of
a logarithmic geometric mean, while the endpoint $q=\infty$ is treated separately using a
standard testing argument. These results greatly extend the previous work on
$\calQ_p$-spaces to the general setting of $Q_K$-spaces.
As applications, we characterize composition
operators and Volterra-type integral operators between different
$Q_K$-spaces. In particular, the off-diagonal characterization established
here, together with the previously established diagonal case, completely
resolves Zhao's 2009 open question on composition operators between
$\calQ_p$-spaces.
\end{abstract}

\date{\today}

\subjclass[2020]{30H25, 47B33, 47B91, 42B35}

\keywords{$Q_K$-spaces, holomorphic differentiation operator,
dyadic trace theorem, dyadic capacity gauge, composition operators,
 Volterra-type integral operators}

\maketitle
\tableofcontents

\arraycolsep=1pt
	
\section{Introduction}\label{s1}

The \emph{goal} of this paper is to further the line of research initiated by the first and fourth authors in \cite{HZ26}, where they obtained non-testing characterizations of bounded and compact composition operators on $\calQ_p$-spaces using ideas from dyadic harmonic analysis. We aim to extend this dyadic framework to study boundedness and compactness of the more general weighted differentiation operator
\begin{equation} \label{20260726prob01}
\frac{d}{dz}:Q_K\longrightarrow L^q(WdA),
\end{equation}
where\footnote{Here, the case $q=0$ is understood in the limiting sense as $q\to0^+$; see Theorem~\ref{20260728thm02} for a precise formulation.} $0 \le q \le \infty$, $W\geq 0$ is a locally integrable function (weight) on the open unit disc $\D$, $dA$ is the normalized area measure on $\D$, and $L^q(WdA)$ is the weighted $L^p$-space induced by the measure $WdA$. As applications, we are able to
\begin{enumerate}
    \item[$\bullet$] completely characterize the boundedness and compactness of composition operators acting on various $Q_K$-spaces in terms of the corresponding dyadic gauge conditions (see, Corollary \ref{c2}), and therefore fully resolve the off-diagonal case of Zhao's 2009 open problem \cite{Zhao09} (see, Corollary \ref{cZhao});

    \item [$\bullet$] completely characterize the boundedness and compactness of Volterra-type integral operators acting on various $Q_K$-spaces, in terms of the corresponding dyadic gauge conditions (see, Corollary \ref{c3}).
\end{enumerate}

We begin by recalling the definition of the $Q_K$-spaces, which may be viewed as weighted generalizations of the $\calQ_p$-spaces; see the monograph of Wulan and Zhu \cite{WZ17} for further background. Let $K$ be a non-decreasing and right-continuous self-map of $[0,\infty)$  obeying the following conditions:
\begin{enumerate}
\item[(a)] \emph{Positivity.}
$$
K(0)=0
\qquad\text{and}\qquad
K(t)>0 \quad \text{for all } t>0.
$$

\item[(b)] \emph{Integrability.}
$$
\int_{\D}K\left(\log\frac{1}{|z|}\right)\,dA(z)<\infty.
$$

\item[(c)] \emph{Size.} Define
$$
\varphi_K(s):=\sup_{0<t\leq 1}\frac{K(st)}{K(t)},
\qquad s>0.
$$
Then
$$
\int_0^1\frac{\varphi_K(s)}{s}\,ds<\infty,
$$
and there exists some $\sigma>0$ such that
$$
\int_1^\infty\frac{\varphi_K(s)}{s^{1+\sigma}}\,ds<\infty.
$$

\item[(d)] \emph{Doubling.} There exists a constant $C_K>0$ such that
$$
K(2t)\leq C_KK(t),
\qquad 0<t\leq\frac{1}{2}.
$$
\end{enumerate}
Important examples of such weights include the following:
\begin{enumerate}
\item[$\bullet$] The power weights
$$
K_p(t)=t^p,
\qquad
(p,t)\in(0,\infty)\times[0,1].
$$

\vspace{0.1cm}

\item[$\bullet$] The log--bump weights
$$
K_{\log,p}(t)
=
t\left(\log\frac{e^p}{t}\right)^p,
\qquad
(p,t)\in[0,\infty)\times[0,1].
$$
\end{enumerate}
These two choices recover the spaces $\calQ_p$ and
$\calQ_{\log,p}$, respectively; see, for example,
\cite{Xi01,Xi06} and \cite{LX20,Xi00}.

\vspace{0.1cm}

Let $Q_K$ denote the holomorphic M\"obius-invariant space on $\D$ defined by
$$
\left\{
\begin{aligned}
&Q_K
:=\left\{f\in\text{Hol}(\D):\|f\|_{Q_K,*}<\infty\right\},\\
&\|f\|_{Q_K}:=|f(0)|+\|f\|_{Q_K,*},\\
&\|f\|_{Q_K,*}^2 :=\sup_{a\in\D}\int_{\D}\left|(f\circ\varphi_a)'(z)\right|^2
K\left(1-|z|^2\right)\,dA(z) \simeq\sup_{J\subseteq\T}\int_{Q_J}|f'(z)|^2
K\left(\frac{1-|z|}{|J|}\right)\,dA(z), \\
& \qquad \qquad \quad \text{provided that $K$ satisfies conditions \emph{(a)--(d)}.}
\end{aligned}
\right.
$$
Here, $\text{Hol}(\D)$ denotes the space of  all holomorphic functions on $\D$ equipped with compact--open topology,
$\varphi_a(z)=\frac{a-z}{1-\bar{a}z}, a \in \D $ is the disc automorphism,
$J\subseteq\T = \partial \D$ is an arc and $Q_J$ denotes the usual Carleson tent
\begin{align*}
Q_J := \bigl\{ r e^{it} : e^{it} \in J,\ 1 - |J| \le r < 1 \bigr\},
\end{align*}
see \cite{EWX06,WZ17}. Consequently, $\{K_p,K_{\log,p}\}$ generates the known holomorphic M\"{o}bius invariant space pair $\{\calQ_p, \calQ_{\log,p}\}$.

\vspace{0.1cm}

We first treat the main case, namely, $0<q<\infty$. Following the dyadic framework developed in \cite{HZ26} and the related work \cite{HZ26b,FXX}, we expect that the behavior of the holomorphic differentiation operator $d/dz$ in \eqref{20260726prob01} can be characterized in terms of an appropriate dyadic gauge. For this purpose, let $\HD$ be a dyadic system on $\T=\partial\D$ and let $h=\{h(I)\}_{I\in\HD}$ be a nonnegative sequence. We define the \emph{$Q_K$-capacity gauge} of $h$ by
\begin{align} \label{20260726cond01}
\|h\|_{B_K^q(\HD)}
:=\sup\left\{
\sum_{I\in\HD}d_I^{q/2}h(I):
d_I\geq0\ \text{and}\
\sup_{J\in\HD}\sum_{I\subseteq J}
\frac{K\left(\frac{|I|}{|J|}\right)}{K(|I|)}d_I\leq1
\right\},
\end{align}
where $0 < q < \infty$.
This expression shows up when we try to study the boundedness and compactness of the differentiation operator
$$
\frac{d}{dz}:Q_K\longrightarrow L^q(W\,dA).
$$

For $I \in \HD$, let
$$Q_I^{\text{up}} := \left\{ r e^{it} : e^{it} \in I,\ 1 - |I| \le r < 1 - \frac{|I|}{2} \right\}$$
be the upper part of the Carleson tent. If $E$ is a set, then we let $\one_E$ denote the characteristic function of the set $E$.
Our first main result is the following.

\begin{thm}\label{t1}
Let
$$
\begin{cases}
(q,\rho)\in(0,\infty)\times(0,1);\\
Q_I^{\mathrm{up}}=\text{the upper half part of the Carleson tent $Q_I$};\\
h_{W,q}^K(I):=\dfrac{\displaystyle\int_{Q_I^{\mathrm{up}}}W(z)\,dA(z)}
{\bigl(|I|^2K(|I|)\bigr)^{q/2}};\\
h_{W_\rho,q}^K(I):=\dfrac{\displaystyle\int_{Q_I^{\mathrm{up}}}\one_{\{|z|>\rho\}}W(z)\,dA(z)}
{\bigl(|I|^2 K(|I|)\bigr)^{q/2}}.
\end{cases}
$$
Then the following assertions hold.
\begin{itemize}
\item[(i)] The differentiation operator
$$
\frac{d}{dz}:Q_K\longrightarrow L^q(W\,dA)
$$
is bounded, that is, there exists a constant $C>0$ such that
\begin{align*}
\int_\D |f'(z)|^q W(z) dA(z) \leq C \|f\|_{Q_{K,*}}^q, f \in Q_K,
\end{align*}
if and only if there exists a dyadic system $\HD$ on $\T$ such that
$$
\bigl\|h_{W,q}^K\bigr\|_{B_K^q(\HD)}<\infty.
$$
\item[(ii)] The differentiation operator
$$
\frac{d}{dz}:Q_K\longrightarrow L^q(W\,dA)
$$
is compact if and only if
$$
\begin{cases}
\dfrac{d}{dz}:Q_K\longrightarrow L^q(W\,dA)\ \text{is bounded};\\
\displaystyle\lim_{\rho\to1^-}\sup_{\HD}
\bigl\|h_{W_\rho,q}^K\bigr\|_{B_K^q(\HD)}=0,
\end{cases}
$$
where the supremum is taken over all dyadic systems $\HD$ on $\T$.
\end{itemize}
\end{thm}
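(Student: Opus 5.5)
The strategy is to discretize both the $Q_K$-norm and the integral $\int_\D |f'|^qW\,dA$ over the upper halves $Q_I^{\mathrm{up}}$, $I\in\HD$. Everything then reduces to a dyadic trace statement: the sequences $d_I$ admissible in \eqref{20260726cond01} are exactly, up to constants, the traces
\[
d_I\simeq |I|^2K(|I|)\,\sup_{Q_I^{\mathrm{up}}}|f'|^2,\qquad f\in Q_K .
\]
Concretely, on $Q_I^{\mathrm{up}}$ we have $1-|z|\simeq|I|$. By the doubling condition (d), $K\bigl(\tfrac{1-|z|}{|J|}\bigr)\simeq K\bigl(\tfrac{|I|}{|J|}\bigr)$ for $z\in Q_I^{\mathrm{up}}$ and $I\subseteq J$. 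The tents $Q_J$ are covered by a bounded number of tents from $\HD$ together with an adjacent shifted system (the usual one-third trick). Hence
\[
\|f\|_{Q_K,*}^2\simeq \sup_{J}\sum_{I\subseteq J}K\Bigl(\tfrac{|I|}{|J|}\Bigr)\int_{Q_I^{\mathrm{up}}}|f'|^2\,dA .
\]

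\textbf{Sufficiency in (i).} Fix $f$ with $\|f\|_{Q_K,*}\le1$. Define
\[
d_I:=c\,|I|^2K(|I|)\sup_{\widetilde Q_I^{\mathrm{up}}}|f'|^2 ,
\]
where $\widetilde Q_I^{\mathrm{up}}$ is a slightly enlarged hyperbolic neighborhood. By subharmonicity, $\sup_{\widetilde Q_I^{\mathrm{up}}}|f'|^2\lesssim |I|^{-2}\int_{\widehat Q_I}|f'|^2\,dA$, where $\widehat Q_I$ is a fixed hyperbolic enlargement. Using (d) again, this gives
\[
\sum_{I\subseteq J}\frac{K(|I|/|J|)}{K(|I|)}d_I\lesssim \sum_{I\subseteq J}K(|I|/|J|)\int_{\widehat Q_I}|f'|^2\lesssim 1 .
\]
The enlarged regions overlap boundedly, and those leaking outside $Q_J$ are absorbed by neighbouring dyadic intervals of the same generation. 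Then
\[
\int_\D|f'|^qW\,dA\le\sum_I\sup_{Q_I^{\mathrm{up}}}|f'|^q\int_{Q_I^{\mathrm{up}}}W\,dA=\sum_I d_I^{q/2}h^K_{W,q}(I)\lesssim\|h^K_{W,q}\|_{B^q_K(\HD)} .
\]

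\textbf{Necessity in (i).} This is the main obstacle. We need, for every admissible $\{d_I\}$, a function $f\in Q_K$ with $\|f\|_{Q_K,*}\lesssim1$ and $|f'|^2\gtrsim d_I/(|I|^2K(|I|))$ on $Q_I^{\mathrm{up}}$. Since $q$ is arbitrary, we cannot rely on any subadditivity of $|f'|^q$, so we need genuine pointwise lower bounds. The plan is to use randomization. Set
\[
f_\omega(z)=\sum_I \varepsilon_I\, a_I\,\frac{|I|^{N}}{(1-\bar z_I z)^{N}},\qquad a_I=\Bigl(\frac{d_I}{K(|I|)}\Bigr)^{1/2},
\]
with $z_I$ the center of $Q_I^{\mathrm{up}}$, $N$ large, and $\varepsilon_I$ Rademacher signs. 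The $Q_K$ bound, valid uniformly in $\omega$ or at least in expectation, follows from the off-diagonal decay of the kernels combined with the size condition (c). Condition (c) gives $\varphi_K(s)\lesssim s^{\sigma'}$-type control with $\int_0^1\varphi_K(s)/s\,ds<\infty$. This lets the cross terms $\sum_{I,I'}$ be summed as a Schur-type estimate, reducing the tent integral to
\[
\sum_{I\subseteq J}K(|I|/|J|)\,a_I^2\lesssim\sum_{I\subseteq J}\frac{K(|I|/|J|)}{K(|I|)}d_I .
\]
Terms with $I\not\subseteq J$ are handled by the $\varphi_K$-integrability at $\infty$, using the $s^{1+\sigma}$ condition. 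Khintchine's inequality then gives
\[
\mathbb E|f_\omega'(z)|^q\simeq\Bigl(\sum_I|\text{term}_I'(z)|^2\Bigr)^{q/2}\gtrsim \Bigl(\frac{d_I}{|I|^2K(|I|)}\Bigr)^{q/2}\quad\text{on } Q_I^{\mathrm{up}} .
\]
Averaging the boundedness inequality over $\omega$ yields $\sum_I d_I^{q/2}h^K_{W,q}(I)\lesssim C$. Taking the supremum over $\{d_I\}$ proves the claim, for any (hence some) $\HD$. Showing that "some $\HD$" implies the full estimate uses the sufficiency argument, which works for a single system once the shifted systems are handled through the $Q_K$ discretization.

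\textbf{Part (ii).} Compactness is equivalent to $\int_{|z|>\rho}|f'_n|^qW\,dA\to0$ uniformly over the unit ball as $\rho\to1$; this is the standard argument, using that $f_n\to0$ locally uniformly on bounded sets. Applying the argument of part (i) to the weight $\one_{\{|z|>\rho\}}W$ gives both directions. The constants are independent of $\HD$, which yields the uniform $\sup_{\HD}$ in the vanishing condition. For the converse direction, the Khintchine construction for $W_\rho$ produces test functions whose norms are uniform; by truncating to $|I|<1-\rho$ they tend to $0$ locally uniformly, so compactness forces the gauge to vanish.
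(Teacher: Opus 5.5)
Your argument is correct in outline and follows the paper's architecture: sub-mean-value discretization for sufficiency, and Rademacher signs, Khintchine, and a Bergman-kernel Schur estimate in the tent space $\HT_K$ for necessity. The building blocks differ, however. The paper randomizes normalized indicators $b_{I,\nu}$ of a fine Whitney partition and sets $F_\varepsilon'=B_bG_\varepsilon$. This makes $\|G_\varepsilon\|_{\HT_K}\lesssim1$ immediate from disjointness and Lemma~\ref{packinglem}, and packages all the analysis as boundedness of $B_b$ on $\HT_K$ (Lemmas~\ref{SchurK}--\ref{estimateintegralnpo}). The price is Lemma~\ref{whitneyest}, which needs the fine partition to prevent cancellation inside $B_bb_{I,\nu}$. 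Your discrete molecules make the diagonal lower bound trivial, since a single kernel has modulus $\simeq a_I/|I|$ on $Q_I^{\mathrm{up}}$ (take $z_{\T}\neq0$ so the top molecule is not constant). In exchange, all the work moves into the cross-term estimate, which you only sketch. That estimate must hold uniformly in $\omega$, not merely in expectation, because the $Q_K$-norm is a supremum over arcs and $\mathbb{E}\sup_J\neq\sup_J\mathbb{E}$. The quickest way to supply it is to note that, with $N=b+1$,
$$
\frac{|I|^{N}}{|1-\bar z_Iz|^{N+1}}\simeq\frac{1}{|I|}\int_{Q_I^{\mathrm{up}}}\frac{(1-|w|^2)^b}{|1-z\bar w|^{b+2}}\,dA(w),\qquad z\in\D .
$$
Hence $|f_\omega'|$ is dominated pointwise by the positive Bergman-type operator applied to $\sum_I a_I|I|^{-1}\one_{Q_I^{\mathrm{up}}}$, and this function has $\HT_K$-norm $\lesssim1$ by Lemma~\ref{packinglem}. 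The paper's proof of Lemma~\ref{bddnessbergproj} uses only the absolute value of the kernel, so it bounds this positive operator as well, and your construction becomes rigorous. For (ii), your first route is shorter than the paper's contradiction argument: compactness forces $\sup_{\|f\|_{Q_{K,*}}\le1}\int_{|z|>\rho}|f'|^qW\,dA\to0$, and you then apply the necessity half of (i) to $W_\rho$, whose constants depend on neither $W$ nor $\HD$. In your second route, local uniform convergence to $0$ is immediate from $a_I\lesssim1$ and $\sum_{|I|\lesssim1-\rho}|I|^N\to0$. This replaces the paper's Cauchy--Schwarz bound via $\int_0^{C_0(1-\rho_n)}t^{2b}/K(t)\,dt$.
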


\begin{rem}
\begin{enumerate}
\item Theorem~\ref{t1} extends both the dyadic trace theorem
\cite[Theorem~1.4]{HZ26} and its compact counterpart
\cite[Theorem~5.2]{HZ26} to the more general setting of $Q_K$-spaces
and arbitrary exponents $0<q<\infty$. The original results
concern the holomorphic differentiation operator
$$
\frac{d}{dz}:\calQ_p\longrightarrow L^2(W\,dA).
$$
The proof of Theorem~\ref{t1} is also much more involved and some new ingredients are introduced.

\item Theorem~\ref{t1} also shows that the dyadic framework initiated
in \cite{HZ26} and further developed here provides a unified approach
to embedding problems in the holomorphic setting. To illustrate the unified nature of Theorem~\ref{t1}, let us briefly
recall a classical example. For $1\leq p,q<\infty$, consider the embedding
$$
\operatorname{id}:A^p(\D)\longrightarrow L^q(\mu),
$$
where $A^p(\D)$ denotes the standard Bergman space. Its boundedness
characterization naturally splits into the two cases $p\leq q$ and $p>q$.
More precisely, fix $0<r<1$, let $\Delta(z,r)$ denote the
pseudohyperbolic disk centered at $z$ with radius $r$, and define $\widehat{\mu}_r(z):= \frac{\mu\bigl(\Delta(z,r)\bigr)} {(1-|z|^2)^2}$. Then the above embedding is bounded if and only if
$$
\begin{cases}
\displaystyle
\sup_{z\in\D}
\frac{\mu\bigl(\Delta(z,r)\bigr)}
     {(1-|z|^2)^{2q/p}}
<\infty,
& p\leq q,\\[3ex]
\displaystyle
\widehat{\mu}_r\in L^{p/(p-q)}(\D,dA),
& p>q.
\end{cases}
$$
We refer the reader to the influential works of Luecking
\cite{Luecking1983,Luecking1991,Luecking1993} for further details.

By contrast, although $Q_K$ is defined through an $L^2$-integral of
$f'$, the $Q_K$-capacity gauge in \eqref{20260726cond01} takes the
same form for $q\leq2$ and $q>2$, and yields a unified characterization
for all $0<q<\infty$.
\end{enumerate}
\end{rem}

For the endpoint case $q=\infty$, we use the standard testing argument to obtain the following result.

\begin{thm}\label{20260726thm01}
Let $K$ satisfy assumptions {\rm (a)--(d)}, and let $W$ be a weight on $\D$. Then the following assertions are equivalent:
\begin{itemize}
\item[(i)] The differentiation operator
$$
\frac{d}{dz}:Q_K\longrightarrow L^\infty(W\,dA)
$$
is bounded;

\item[(ii)] The differentiation operator
$$
\frac{d}{dz}:Q_K\longrightarrow L^\infty(W\,dA)
$$
is compact;

\item[(iii)] There exists $0<r<1$ such that
$$
W(z)=0
$$
for almost every $z\in\D$ with $|z|>r$.
\end{itemize}
\end{thm}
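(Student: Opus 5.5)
The plan is to prove the cycle (iii)$\Rightarrow$(ii)$\Rightarrow$(i)$\Rightarrow$(iii). Throughout, $L^\infty(W\,dA)$ is understood with respect to the measure $W\,dA$. Thus $\|g\|_{L^\infty(W\,dA)}$ is the essential supremum of $|g|$ over the set $E_W:=\{z\in\D: W(z)>0\}$, taken with respect to area measure. The implication (ii)$\Rightarrow$(i) is trivial, so only two steps need work.

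\emph{(iii)$\Rightarrow$(ii).} Suppose $W=0$ a.e.\ on $\{|z|>r\}$. Then $\|f'\|_{L^\infty(W\,dA)}\le \sup_{|z|\le r}|f'(z)|$.

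First I recall that $Q_K$ embeds into the Bloch space, which is standard under (a)--(d). Concretely, restrict the Möbius-invariant integral to a small disc around $0$ and use subharmonicity of $|(f\circ\varphi_a)'|^2$. Since $K(1-|z|^2)$ is bounded below there, this gives
$$
(1-|a|^2)|f'(a)|\lesssim \|f\|_{Q_K,*}.
$$
Hence $\sup_{|z|\le r}|f'(z)|\le C_r\|f\|_{Q_K,*}$.

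For compactness, take a bounded sequence $(f_n)$ in $Q_K$. By the Bloch bound it is a normal family, so some subsequence has derivatives converging uniformly on $\{|z|\le r\}$. That subsequence is then Cauchy in $L^\infty(W\,dA)$, which proves compactness.

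\emph{(i)$\Rightarrow$(iii).} Argue by contradiction. Assume that for every $r<1$ the set $E_W\cap\{|z|>r\}$ has positive area measure. The test functions are the automorphisms $\varphi_a$. By Möbius invariance of the seminorm,
$$
\|\varphi_a\|_{Q_K,*}=\|z\|_{Q_K,*}\simeq\left(\int_\D K(1-|z|^2)\,dA(z)\right)^{1/2}<\infty
\quad\text{for all } a\in\D.
$$
This quantity is finite by the integrability condition (b) together with doubling (d), since $1-|z|^2\simeq\log\frac1{|z|}$ near the boundary and $K$ is bounded near $0$ in the interior. So the family $\{\varphi_a\}_{a\in\D}$ is uniformly bounded in $Q_K$.

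On the other hand, the derivative satisfies
$$
|\varphi_a'(z)|=\frac{1-|a|^2}{|1-\bar a z|^2}\simeq\frac1{1-|a|^2}
\quad\text{for } z\in\Delta(a,1/2).
$$

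Now choose a countable cover of the annulus $\{|z|>r\}$ by discs $\Delta(a,1/2)$ with $|a|\ge r'$, where $r'\to1$ as $r\to1$. For each $r$, some disc in this cover meets $E_W$ in a set of positive measure. This produces points $a_n$ with $|a_n|\to1$ such that $E_W\cap\Delta(a_n,1/2)$ has positive measure. Consequently
$$
\|\varphi_{a_n}'\|_{L^\infty(W\,dA)}\gtrsim \frac{1}{1-|a_n|^2}\longrightarrow\infty,
$$
which contradicts boundedness.

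I expect the main (mild) obstacle to be the bookkeeping rather than any estimate. One point is fixing the correct meaning of the $L^\infty(W\,dA)$ norm as an essential supremum over $E_W$. The other is making sure $\|z\|_{Q_K,*}<\infty$ follows from (b) as stated, which needs $\log\frac1{|z|}$ and $1-|z|^2$ to be comparable near $\T$ via (d).
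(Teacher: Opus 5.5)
Your proof is correct and follows essentially the same route as the paper: (iii)$\Rightarrow$(ii) via the Bloch embedding and normal families, and (i)$\Rightarrow$(iii) by testing on the uniformly bounded automorphisms $\varphi_a$. The paper handles the null-set bookkeeping slightly differently, using a countable dense set $\{a_j\}$ and the identity $\sup_j|\varphi_{a_j}'(z)|=\frac{1}{1-|z|^2}$ to get a $W\,dA$-a.e.\ bound directly, where you localize to discs $\Delta(a,1/2)$ and argue by contradiction; also, the uniform bound on $\|\varphi_a\|_{Q_K,*}$ needs neither (b) nor (d), since after a change of variables each integral in $\|\varphi_a\|_{Q_K,*}^2$ equals $\int_\D K(1-|\psi(w)|^2)\,dA(w)\le K(1)$ for some automorphism $\psi$, by monotonicity of $K$.
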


For the limiting endpoint case corresponding to $q=0$, we first explain what should replace $L^q(WdA)$ at this endpoint. Let $W$ be a nonzero weight satisfying $0<W(\D):=\int_{\D}W(z)\,dA(z)<\infty$. For $0<q<\infty$ and a measurable function $g$ on $\D$, define
$$
M_{q,W}(g):=\left(\frac{1}{W(\D)}
\int_{\D}|g(z)|^qW(z)\,dA(z)\right)^{1/q}.
$$
Suppose that $\log|g|$ is integrable with respect to $WdA$ and that
$$
\int_{\D}|g(z)|^{q_0}W(z)\,dA(z)<\infty
$$
for some $q_0>0$. Since
\begin{equation} \label{20260728eq38}
|g|^q=\exp\bigl(q\log|g|\bigr)=1+q\log|g|+o(q)
\end{equation}
in $L^1(WdA)$ for every $q>0$ sufficiently small, we obtain
\begin{equation} \label{20260728eq39}
\frac{1}{W(\D)}\int_{\D}|g(z)|^qW(z)\,dA(z)=
1+\frac{q}{W(\D)}
\int_{\D}W(z)\log|g(z)|\,dA(z)+o(q).
\end{equation}
It follows that
\begin{equation} \label{20260728eq40}
\lim_{q\to0^+}M_{q,W}(g)=\exp\left(
\frac{1}{W(\D)}
\int_{\D}W(z)\log|g(z)|\,dA(z)
\right).
\end{equation}
This motivates defining
$$
M_{0,W}(g):=
\exp\left(
\frac{1}{W(\D)}
\int_{\D}W(z)\log|g(z)|\,dA(z)
\right),
$$
at the endpoint $q=0$, which leads to the study of the following endpoint problem.

\begin{thm}\label{20260728thm02}
Let $K$ satisfy assumptions {\rm (a)--(d)}, and let $W$ be a nonzero
weight on $\D$ such that $0<W(\D)<\infty$. Then the following statements
are equivalent:

\begin{itemize}
\item[(i)] There exists a constant $C>0$ such that
$$
M_{0,W}(f')\leq C\|f\|_{Q_{K,*}},
\qquad f\in Q_K.
$$

\item[(ii)] There exists a constant $C>0$ such that
$$
M_{0,W}(f')\leq C\|f\|_{Q_{K,*}},
\qquad f\in Q_{K,0},
$$
where $Q_{K,0}$ denotes the closure of the analytic polynomials in the
$Q_K$-norm.

\item[(iii)] The weight $W$ satisfies
$$
\int_{\D}W(z)\log\frac{e}{1-|z|^2}\,dA(z)<\infty.
$$
\end{itemize}
\end{thm}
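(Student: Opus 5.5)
We prove the cycle (iii)$\Rightarrow$(i)$\Rightarrow$(ii)$\Rightarrow$(iii). Since $Q_{K,0}\subseteq Q_K$, the implication (i)$\Rightarrow$(ii) is immediate. All the work lies in the two remaining implications.

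\emph{Step 1: (iii)$\Rightarrow$(i).} The key tool is the pointwise derivative estimate for $Q_K\subseteq\mathcal B$ (the Bloch space). Condition (b) together with the doubling condition (d) gives
$$
|f'(z)|\le C\,\frac{\|f\|_{Q_{K,*}}}{1-|z|^2},\qquad z\in\D .
$$
This follows by applying the definition of $\|f\|_{Q_{K,*}}$ with $a=z$, using subharmonicity of $|(f\circ\varphi_z)'|^2$ near the origin and the positivity $K(1-|w|^2)\gtrsim K(1/2)>0$ on $|w|<1/2$. Hence
$$
\log|f'(z)|\le \log\bigl(C\|f\|_{Q_{K,*}}\bigr)+\log\frac{1}{1-|z|^2}.
$$
Integrating against $W\,dA/W(\D)$ and using (iii) bounds the positive part of $\log|f'|$. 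Therefore $\int W\log|f'|\,dA$ is well defined in $[-\infty,\infty)$, and
$$
M_{0,W}(f')\le C\|f\|_{Q_{K,*}}\exp\Bigl(\tfrac{1}{W(\D)}\int_\D W\log\tfrac{e}{1-|z|^2}\,dA\Bigr).
$$
This is exactly (i), with the convention $\exp(-\infty)=0$.

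\emph{Step 2: (ii)$\Rightarrow$(iii).} We argue by contradiction: assume $\int W\log\frac{e}{1-|z|^2}\,dA=\infty$, and produce a test function in $Q_{K,0}$ that violates (ii).

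The natural candidates are functions whose derivative behaves like $(1-\bar a z)^{-1}$ on large regions. One option is $f_a(z)=\log\frac{1}{1-\bar a z}$, which is bounded in $Q_K$ uniformly in $a$ (standard for $Q_K$ spaces). Since $f_a'(z)=\bar a/(1-\bar a z)$ is a polynomial limit for $|a|<1$, each $f_a$ lies in $Q_{K,0}$. However, $\log|f_a'|$ is only large near the single point $a/|a|$, so rotating and averaging is needed.

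Consider $g_r(z)=\int_\T\log\frac{1}{1-r\bar\zeta z}\,d\sigma(\zeta)$. This averages to a constant, which is useless. Instead, we use the subharmonic/Jensen-type inequality in the opposite direction: by Jensen's inequality for the concave function $\log$,
$$
\int_\T \log|f_{r\zeta}'(z)|\,d\sigma(\zeta)=\log r+\int_\T\log\frac{1}{|1-r\bar\zeta z|}\,d\sigma(\zeta)=\log r,
$$
which is also bounded. So rotations alone cannot detect the divergence. This averaging step is the main obstacle.

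\emph{Step 3: a lacunary replacement.} A better choice is a lacunary series $f(z)=\sum_k z^{2^k}$, or a truncation $f_N=\sum_{k\le N}z^{2^k}$, which is a polynomial. Under condition (c), $\|f_N\|_{Q_K}$ is uniformly bounded, since lacunary Bloch functions lie in $Q_K$ when $\int_0^1\varphi_K(s)/s\,ds<\infty$. For such functions one expects
$$
|f_N'(z)|\gtrsim \frac{1}{1-|z|}
$$
on most of each annulus $1-2^{-k}\le|z|\le1-2^{-k-1}$ with $k\le N$, in the sense of averages of $\log|f_N'|$ over circles. This can be obtained via a Zygmund-type lower bound for lacunary series, or via the subharmonicity estimate $\int_\T\log|f'(r\zeta)|\,d\sigma\ge\log|f'(0)|$ applied after factoring out $z^{2^{k}-1}$, together with the dominant term $2^k z^{2^k-1}$.

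To pass from circle averages to the weight $W$, which need not be radial, we average over rotations $f_N(\zeta z)$, $\zeta\in\T$. These have the same norm, and by Fubini the rotational average of $\int W\log|f_N'(\zeta\cdot)|\,dA$ equals $\int \widetilde W\log\ldots$ for the radialization $\widetilde W$. Hence some rotation $\zeta$ satisfies
$$
\int W\log|f_N'(\zeta z)|\,dA\ge c\int_{|z|<1-2^{-N}}W\log\frac{e}{1-|z|^2}\,dA-C.
$$
As $N\to\infty$, the right-hand side diverges by assumption. This contradicts (ii), whose right-hand side stays uniformly bounded, and completes the proof of (ii)$\Rightarrow$(iii).
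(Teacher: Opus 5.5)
Your Step 1 and the implication (i)$\Rightarrow$(ii) are fine and agree with the paper: you integrate the Bloch estimate $|f'(z)|\lesssim\|f\|_{Q_{K,*}}/(1-|z|^2)$ against $W$. The argument for (ii)$\Rightarrow$(iii), however, has a genuine gap at two points.

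\emph{The test functions are not uniformly bounded in $Q_K$.} You claim that $f_N=\sum_{k\le N}z^{2^k}$ is uniformly bounded in $Q_K$ under condition (c). This is false in general. Under (a)--(d), $Q_K$ is typically strictly smaller than the Bloch space, and (c) does not place lacunary Bloch functions in $Q_K$. For example, $K(t)=t^p$ with $0<p\le 1$ satisfies (a)--(d), and the term $a=0$ of the norm alone gives
\[
\|f_N\|_{Q_{K,*}}^2\ \ge\ \int_\D |f_N'(z)|^2(1-|z|^2)^p\,dA(z)\ \simeq\ \sum_{k\le N}4^k\,2^{-k(1+p)}=\sum_{k\le N}2^{k(1-p)}\longrightarrow\infty .
\]
So the right-hand side of (ii) is not bounded, and no contradiction arises. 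Normalizing $f_N$ does not help. The loss $\log\|f_N\|_{Q_{K,*}}$ tends to $\infty$, while the gain $\frac{1}{W(\D)}\int_{|z|<1-2^{-N}}W\log\frac{e}{1-|z|^2}\,dA$ may diverge arbitrarily slowly.

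Since only $\log|f'|$ matters, you do not need full Bloch growth. Coefficients such as $2^{-3k/4}$, giving $|f'|\approx 2^{k/4}$ on the $k$-th annulus, would suffice. This is exactly the role of the paper's choice $d_I^{(N)}=|I|^{3/2}K(|I|)$. It keeps the packing sum \eqref{endpointpacking} bounded and still yields $\mathbb{E}_\zeta\log|F'_{\zeta,N}(z)|\ge\frac14\log\frac{1}{|I|}-C$ on $Q_I^{\mathrm{up}}$.

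\emph{The lower bound on circle averages is not proved.} The estimate $\int_\T\log|f_N'(r\zeta)|\,d\sigma(\zeta)\gtrsim\log\frac{1}{1-r}$ is only asserted, and the tools you suggest do not give it.
\begin{itemize}
\item Subharmonicity gives only $\ge\log|f_N'(0)|=0$.
\item You cannot factor out $z^{2^k-1}$, because of the lower-order terms.
\item On $|z|=1-2^{-k}$ the terms $2^j|z|^{2^j-1}\approx 2^je^{-2^{j-k}}$ with $j=k-1,k,k+1$ are all comparable, so there is no dominant term.
\item Zygmund's $L^p$-equivalence for gap series holds for fixed $p>0$ and does not by itself control the geometric mean.
\end{itemize}
This $p=0$ lower bound is the heart of the matter. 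The paper obtains it by randomizing with \emph{independent} Steinhaus variables and applying the logarithmic Khinchine inequality (Lemma~\ref{logkhinchine}) pointwise in $z$. It adds the extra term $\zeta_0 z$ and uses Jensen's formula to get $\mathbb{E}_\zeta\log|F'_{\zeta,N}(z)|\ge0$ on all of $\D$. Averaging over rotations $f(\zeta z)$ gives only a one-parameter family and supplies no such independence.

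A repair along these lines might be $\zeta_0z+\sum_{k=1}^N\zeta_k2^{-3k/4}z^{2^k}$ with independent Steinhaus variables $\zeta_k$. You would still need a gap-series estimate showing these are uniformly bounded in $Q_K$. As written, your proof of (ii)$\Rightarrow$(iii) does not go through.
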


\begin{rem}
The proof of Theorem~\ref{20260728thm02} is of independent interest. Although neither of the two conditions in Theorem~\ref{20260728thm02} involves a dyadic gauge, the proof uses a dyadic approach similar to those employed in the proof of Theorem~\ref{t1} and \cite[Theorem~1.4]{HZ26}.
\end{rem}

\medskip

We next study several applications of our main results.

\subsection{Application I: Characterization of
$C_\varphi:Q_{K_1}\to Q_{K_2}$ and a solution to Zhao's off-diagonal problem}

As an immediate consequence of Theorem~\ref{t1}, we obtain a complete
characterization of the boundedness and compactness of the composition
operator induced by a holomorphic self-map $\varphi$ of $\D$:
$$
C_\varphi:Q_{K_1}\longrightarrow Q_{K_2},
\qquad
C_\varphi f:=f\circ\varphi.
$$
We have the following.

\begin{cor}\label{c2}
Let $K_1$ and $K_2$ be two weights satisfying the assumptions (a)--(d) imposed
on $K$, and let
$$
\begin{cases}
\varphi=\text{a holomorphic self-map of $\D$};\\[2mm]
N_{K_2,\varphi,a}(\zeta)
:=\displaystyle\sum_{\varphi(z)=\zeta}
K_2\left(1-|\varphi_a(z)|^2\right),
\quad a\in\D;\\[3mm]
h_{K_1,K_2,a}^{\varphi}(I)
:=\displaystyle\frac{1}{|I|^2K_1(|I|)}
\int_{Q_I^{\mathrm{up}}}
N_{K_2,\varphi,a}(\zeta)\,dA(\zeta),
\quad a\in\D,\ I\subseteq\T;\\[4mm]
H_{a,\rho}^{K_1,K_2,\varphi}(I)
:=\displaystyle\frac{1}{|I|^2K_1(|I|)}
\int_{Q_I^{\mathrm{up}}\cap\{\rho<|\zeta|<1\}}
N_{K_2,\varphi,a}(\zeta)\,dA(\zeta),\\
\hfill (a,\rho)\in\D\times(0,1),\ I\subseteq\T.
\end{cases}
$$
Then the following assertions hold.
\begin{itemize}
\item[(i)] The composition operator
$$
C_\varphi:Q_{K_1}\longrightarrow Q_{K_2}
$$
is bounded if and only if
$$
\exists\ \text{a dyadic system $\HD$ on $\T$ such that}\quad
\sup_{a\in\D}
\bigl\|h_{K_1,K_2,a}^{\varphi}\bigr\|_{B_{K_1}^2(\HD)}
<\infty.
$$

\item[(ii)] The composition operator
$$
C_\varphi:Q_{K_1}\longrightarrow Q_{K_2}
$$
is compact if and only if
$$
\begin{cases}
C_\varphi:Q_{K_1}\longrightarrow Q_{K_2}
\ \text{is bounded};\\[2mm]
\displaystyle
\lim_{\rho\to1^-}\sup_{a\in\D}\sup_{\HD}
\bigl\|H_{a,\rho}^{K_1,K_2,\varphi}\bigr\|_{B_{K_1}^2(\HD)}
=0,
\end{cases}
$$
where the supremum $\sup_{\HD}$ is taken over all dyadic systems
$\HD$ on $\T$.
\end{itemize}
\end{cor}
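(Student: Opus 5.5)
The plan is to reduce Corollary~\ref{c2} to Theorem~\ref{t1} with $q=2$ and $K=K_1$, using a family of weights indexed by $a\in\D$. For $f\in Q_{K_1}$ we compute $\|C_\varphi f\|_{Q_{K_2},*}^2$ from the Möbius form of the norm. Since $(f\circ\varphi\circ\varphi_a)'(z)=f'(\varphi(\varphi_a(z)))\,\varphi'(\varphi_a(z))\,\varphi_a'(z)$, we substitute $w=\varphi_a(z)$ and use $1-|\varphi_a(w)|^2=(1-|w|^2)|\varphi_a'(w)|$ together with $\varphi_a\circ\varphi_a=\mathrm{id}$. This gives
$$
\int_\D |(C_\varphi f\circ\varphi_a)'(z)|^2K_2(1-|z|^2)\,dA(z)=\int_\D |f'(\varphi(w))|^2|\varphi'(w)|^2K_2\bigl(1-|\varphi_a(w)|^2\bigr)\,dA(w).
$$
The nonunivalent change of variables $\zeta=\varphi(w)$ then turns the right-hand side into $\int_\D|f'(\zeta)|^2N_{K_2,\varphi,a}(\zeta)\,dA(\zeta)$. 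Taking the supremum over $a$, we get
$$
\|C_\varphi f\|_{Q_{K_2},*}^2=\sup_{a\in\D}\int_\D|f'|^2\,W_a\,dA,\qquad W_a:=N_{K_2,\varphi,a}.
$$
By definition, $h^{K_1}_{W_a,2}=h^{\varphi}_{K_1,K_2,a}$, and the truncated version is $H^{K_1,K_2,\varphi}_{a,\rho}$. The constant part $|f(0)|$ causes no trouble: $|f(\varphi(0))-f(0)|\lesssim\|f\|_{Q_{K_1},*}$, because $Q_{K_1}\subset\mathcal B$ and Bloch functions satisfy a growth estimate at the fixed point $\varphi(0)$.

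\textbf{Boundedness.} $C_\varphi$ is bounded if and only if $d/dz:Q_{K_1}\to L^2(W_a\,dA)$ is bounded with norm uniform in $a$. The proof of Theorem~\ref{t1}(i) has to be read quantitatively: the operator norm squared is comparable to $\|h^{K_1}_{W,2}\|_{B^2_{K_1}(\HD)}$, with constants depending only on $K_1$ and $q$. The main obstacle is the choice of dyadic system, since Theorem~\ref{t1} only asserts the existence of some $\HD$, which could a priori depend on $W$. I would argue that the gauge is comparable across the standard dyadic systems. The upper halves $Q_I^{\rm up}$ for $I$ in any dyadic system are covered by boundedly many $Q_{I'}^{\rm up}$ with $I'$ in a fixed system and $|I'|\simeq|I|$. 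The constraint $\sup_J\sum_{I\subseteq J}\frac{K_1(|I|/|J|)}{K_1(|I|)}d_I\le1$ then transfers up to constants by the doubling condition (d) and the one-third trick. If this comparability fails in general, I would fall back on the necessity half of the proof of Theorem~\ref{t1}, which bounds the gauge for every $\HD$ by the operator norm. Applied uniformly in $a$, it gives the bound for a single fixed $\HD$. Together with the sufficiency half, this yields (i).

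\textbf{Compactness.} I would use the standard criterion that $C_\varphi$ is compact if and only if $\|C_\varphi f_n\|_{Q_{K_2}}\to0$ for every bounded sequence $(f_n)$ in $Q_{K_1}$ converging to $0$ uniformly on compacta. We split
$$
\int_\D|f_n'|^2W_a\,dA=\int_{|\zeta|\le\rho}+\int_{|\zeta|>\rho}.
$$
For the first piece, use $\sup_a W_a(\D)<\infty$, which follows from boundedness of $C_\varphi$ tested on $f(z)=z$. Since $f_n'\to0$ uniformly on $\{|\zeta|\le\rho\}$, this piece tends to $0$ uniformly in $a$. For the second piece, apply the sufficiency estimate from Theorem~\ref{t1} to the weight $\one_{\{|\zeta|>\rho\}}W_a$. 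It is bounded by $\sup_a\sup_{\HD}\|H_{a,\rho}^{K_1,K_2,\varphi}\|_{B^2_{K_1}(\HD)}\sup_n\|f_n\|^2$, which is small as $\rho\to1^-$; this gives sufficiency. For necessity, I would follow the proof of Theorem~\ref{t1}(ii) uniformly in $a$. For each $a$, $\rho$, $\HD$, the testing functions built there from a near-extremal sequence $d_I$ for the gauge are supported on intervals with $|I|\lesssim1-\rho$. These functions are uniformly bounded in $Q_{K_1}$ and tend to $0$ locally uniformly as $\rho\to1$, so compactness forces the gauge to vanish. The delicate point is again that all constants must be uniform in $a$, which holds because they depend only on $K_1$.
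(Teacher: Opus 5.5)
Your proposal is correct and follows the same route as the paper. The change-of-variables identity $\|C_\varphi f\|_{Q_{K_2},*}^2=\sup_{a\in\D}\int_\D|f'|^2N_{K_2,\varphi,a}\,dA$ reduces both assertions to the case $q=2$, $K=K_1$ of Theorem~\ref{t1}, run uniformly in $a$; the paper only sketches this as an ``appropriate modification'' and omits the details. Your fallback, that the necessity argument bounds the gauge for every dyadic system with constants depending only on $K_1$, is all that (i) needs, and your bound $\sup_a\int_\D N_{K_2,\varphi,a}\,dA=\|\varphi\|_{Q_{K_2},*}^2$ (finite once $C_\varphi$ is bounded) correctly supplies the uniformity in $a$ required for compactness sufficiency.
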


\begin{proof}
The result follows from an appropriate modification of the arguments used in
\cite[Theorems~1.2 and~1.7]{HZ26}, together with the following
change-of-variables formula:
$$
\begin{aligned}
&\int_{\D}|(f\circ\varphi)'(z)|^2
K_2\left(1-|\varphi_a(z)|^2\right)dA(z)\\
&\qquad=
\int_{\D}|f'(\zeta)|^2
\sum_{\varphi(z)=\zeta}
K_2\left(1-|\varphi_a(z)|^2\right)dA(\zeta)\\
&\qquad=
\int_{\D}|f'(\zeta)|^2
N_{K_2,\varphi,a}(\zeta)\,dA(\zeta).
\end{aligned}
$$
The remaining details are omitted.
\end{proof}

We now explain how Corollary~\ref{c2} resolves the off-diagonal part
of Zhao's open problem. Recall that Zhao \cite{Zhao09} posed the
following question concerning composition operators acting on
$\calQ_p$-spaces.

\begin{opques}
What are the conditions in terms of $\varphi$ so that $C_\varphi$ is
bounded or compact on $\calQ_p$, or between different $\calQ_p$-spaces,
for $0<p<1$?
\end{opques}

In \cite{HZ26}, the first and fourth authors considered only the
diagonal part of this problem, namely,
$$
C_\varphi:\calQ_p\longrightarrow\calQ_p;
$$
see, \cite[Theorems~1.2 and~1.7]{HZ26}. Corollary~\ref{c2} shows that the dyadic trace theorem developed there,
as well as its more general form given by Theorem~\ref{t1}, also yields
a complete solution to the off-diagonal part of Zhao's question.

More precisely, we have the following.

\begin{cor}\label{cZhao}
Let
$$
\begin{cases}
(p_1,p_2,\rho)\in(0,1)^2\times(0,1),
\quad p_1\neq p_2;\\[1mm]
\varphi=\text{a holomorphic self-map of $\D$};\\[2mm]
N_{p_2,\varphi,a}(\zeta)
:=\displaystyle\sum_{\varphi(z)=\zeta}
\left(1-|\varphi_a(z)|^2\right)^{p_2},
\quad a\in\D;\\[3mm]
h_{p_1,p_2,a}^{\varphi}(I)
:=\displaystyle\frac{1}{|I|^{p_1+2}}
\int_{Q_I^{\mathrm{up}}}
N_{p_2,\varphi,a}(\zeta)\,dA(\zeta),
\quad a\in\D,\ I\subseteq\T;\\[4mm]
H_{a,\rho}^{p_1,p_2,\varphi}(I)
:=\displaystyle\frac{1}{|I|^{p_1+2}}
\int_{Q_I^{\mathrm{up}}\cap\{\rho<|\zeta|<1\}}
N_{p_2,\varphi,a}(\zeta)\,dA(\zeta),\\
\hfill (a,\rho)\in\D\times(0,1),\ I\subseteq\T.
\end{cases}
$$
Then the following assertions hold.
\begin{itemize}
\item[(i)] The composition operator
$$
C_\varphi:\calQ_{p_1}\longrightarrow\calQ_{p_2}
$$
is bounded if and only if
$$
\exists\ \text{a dyadic system $\HD$ on $\T$ such that}\quad
\sup_{a\in\D}
\bigl\|h_{p_1,p_2,a}^{\varphi}\bigr\|_{B_{K_{p_1}}^2(\HD)}
<\infty.
$$

\item[(ii)] The composition operator
$$
C_\varphi:\calQ_{p_1}\longrightarrow\calQ_{p_2}
$$
is compact if and only if
$$
\begin{cases}
C_\varphi:\calQ_{p_1}\longrightarrow\calQ_{p_2}
\ \text{is bounded};\\[2mm]
\displaystyle
\lim_{\rho\to1^-}\sup_{a\in\D}\sup_{\HD}
\bigl\|H_{a,\rho}^{p_1,p_2,\varphi}\bigr\|_
{B_{K_{p_1}}^2(\HD)}
=0,
\end{cases}
$$
where the supremum $\sup_{\HD}$ is taken over all dyadic systems
$\HD$ on $\T$.
\end{itemize}
\end{cor}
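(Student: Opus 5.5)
Corollary~\ref{cZhao} is the special case of Corollary~\ref{c2} with $K_1=K_{p_1}$ and $K_2=K_{p_2}$, $K_{p}(t)=t^{p}$. The plan has three steps: check that these power weights are admissible, identify the quantities of Corollary~\ref{c2} with those of Corollary~\ref{cZhao}, and then read off the two equivalences.

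\textbf{Admissibility of $K_p$, $0<p<1$.}
\begin{itemize}
\item Condition (a) and monotonicity are clear.
\item Condition (b) holds because $\int_{\D}(\log\frac1{|z|})^{p}\,dA(z)<\infty$; the singularity at $z=0$ is only logarithmic.
\item For condition (c), $\varphi_{K_p}(s)=\sup_{0<t\le1}(st)^p/t^p=s^p$. Hence $\int_0^1 s^{p-1}\,ds<\infty$, and $\int_1^\infty s^{p-1-\sigma}\,ds<\infty$ for any $\sigma>p$.
\item Condition (d) holds with $C_K=2^p$.
\end{itemize}
So Corollary~\ref{c2} applies. We also need that $Q_{K_p}$ coincides with $\calQ_p$ with equivalent norms. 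Since $(1-|z|^2)^p$ is exactly the $\calQ_p$ weight, this holds by definition.

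\textbf{Identification of the quantities.}
\begin{itemize}
\item $N_{K_{p_2},\varphi,a}=N_{p_2,\varphi,a}$.
\item Since $|I|^2K_{p_1}(|I|)=|I|^{p_1+2}$, we get $h^{\varphi}_{K_{p_1},K_{p_2},a}=h^{\varphi}_{p_1,p_2,a}$ and $H^{K_{p_1},K_{p_2},\varphi}_{a,\rho}=H^{p_1,p_2,\varphi}_{a,\rho}$.
\item The gauge $B^2_{K_{p_1}}(\HD)$ in Corollary~\ref{cZhao} is literally the one in \eqref{20260726cond01}. Its constraint reads
$$\sum_{I\subseteq J}\frac{(|I|/|J|)^{p_1}}{|I|^{p_1}}\,d_I=|J|^{-p_1}\sum_{I\subseteq J}d_I\le1.$$
\end{itemize}
With these identifications, assertions (i) and (ii) of Corollary~\ref{cZhao} are exactly assertions (i) and (ii) of Corollary~\ref{c2}. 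The hypothesis $p_1\ne p_2$ is not used; it only records that this is the off-diagonal case.

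\textbf{Main obstacle.} The substantive content lies in Corollary~\ref{c2}, whose proof is only sketched via Theorem~\ref{t1} and the change-of-variables formula. If we verify that step ourselves, the plan is as follows.
\begin{itemize}
\item The change-of-variables formula gives
$$\|C_\varphi f\|_{Q_{K_2},*}^2\simeq\sup_a\int_\D|f'|^2\,N_{K_2,\varphi,a}\,dA.$$
\item Apply Theorem~\ref{t1}(i) with $q=2$ and $W=N_{K_2,\varphi,a}$, keeping track of uniformity in $a$. The constants in Theorem~\ref{t1} depend only on $K_1$ and $q$, not on $W$. This uniformity is the point we expect to need care.
\item For the constant term, note $|C_\varphi f(0)|=|f(\varphi(0))|\lesssim\|f\|_{Q_{K_1}}$ by point evaluation bounds.
\item For compactness, use Theorem~\ref{t1}(ii) together with the standard criterion: $C_\varphi$ is compact if and only if $\|C_\varphi f_n\|\to0$ for every bounded sequence $(f_n)$ converging to $0$ locally uniformly. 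Again we need uniformity in $a$ of the tail estimate on $\{|\zeta|>\rho\}$.
\end{itemize}
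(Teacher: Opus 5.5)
Your proposal is correct and follows the paper's proof: specialize Corollary~\ref{c2} to $K_1=K_{p_1}$ and $K_2=K_{p_2}$, then use $Q_{K_{p_j}}=\calQ_{p_j}$ and $|I|^2K_{p_1}(|I|)=|I|^{p_1+2}$. Your explicit check of conditions (a)--(d) for $t^p$ is a useful addition the paper leaves implicit, and so is your observation that Corollary~\ref{c2} relies on constants in the proof of Theorem~\ref{t1} being independent of $W$ (hence of $a$) and of the dyadic system $\HD$.
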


\begin{proof}
Apply Corollary~\ref{c2} with $K_1=K_{p_1}$ and $K_2=K_{p_2}$,
where $K_{p_j}(t)=t^{p_j}$. Since $Q_{K_{p_j}}=\calQ_{p_j}$ and
$|I|^2K_{p_1}(|I|)=|I|^{p_1+2}$, both assertions follow immediately.
\end{proof}

\subsection{Application II: Characterization of
Volterra-type integral operators between $Q_K$-spaces}

As a second immediate consequence of Theorem~\ref{t1}, we obtain a
complete characterization of the boundedness and compactness of
Volterra-type integral operators between $Q_K$-spaces.

\begin{cor}\label{c3}
Let
$$
\begin{cases}
K_1\text{ and }K_2\text{ be two weights satisfying the assumptions (a)--(d)
imposed on }K;\\[1mm]
(g,\rho)\in\operatorname{Hol}(\D)\times(0,1);\\[1mm]
\displaystyle
I_gf(z):=\int_0^z g(\zeta)f'(\zeta)\,d\zeta,
\quad (f,z)\in Q_{K_1}\times\D;\\[3mm]
W_{g,K_2}(a,z)
:=
|g(z)|^2K_2\left(1-|\varphi_a(z)|^2\right),
\quad (a,z)\in\D\times\D.
\end{cases}
$$
Then the following assertions hold.
\begin{itemize}
\item[(i)] The Volterra-type integral operator
$$
I_g:Q_{K_1}\longrightarrow Q_{K_2}
$$
is bounded if and only if
$$
\exists\ \text{a dyadic system $\HD$ on $\T$ such that}\quad
\sup_{a\in\D}
\bigl\|
h_{W_{g,K_2}(a,\cdot),2}^{K_1}
\bigr\|_{B_{K_1}^2(\HD)}
<\infty.
$$

\item[(ii)] The Volterra-type integral operator
$$
I_g:Q_{K_1}\longrightarrow Q_{K_2}
$$
is compact if and only if
$$
\begin{cases}
I_g:Q_{K_1}\longrightarrow Q_{K_2}
\text{ is bounded};\\[2mm]
\displaystyle
\lim_{\rho\to1^-}
\sup_{a\in\D}\sup_{\HD}
\bigl\|
h_{(W_{g,K_2}(a,\cdot))_\rho,2}^{K_1}
\bigr\|_{B_{K_1}^2(\HD)}
=0,
\end{cases}
$$
where the supremum $\sup_{\HD}$ is taken over all dyadic systems
$\HD$ on $\T$.
\end{itemize}
\end{cor}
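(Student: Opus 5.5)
Since $(I_gf)'=gf'$ and $I_gf(0)=0$, the plan is to reduce Corollary~\ref{c3} to Theorem~\ref{t1} with $q=2$, applied to the family of weights $W_a:=W_{g,K_2}(a,\cdot)$, $a\in\D$, with bounds uniform in $a$. By the M\"obius-invariant definition of the $Q_{K_2}$-seminorm,
$$
\|I_gf\|_{Q_{K_2},*}^2=\sup_{a\in\D}\int_{\D}|g(z)|^2|f'(z)|^2K_2\left(1-|\varphi_a(z)|^2\right)dA(z)
=\sup_{a\in\D}\int_{\D}|f'(z)|^2W_a(z)\,dA(z).
$$
Here the identity $|(F\circ\varphi_a)'(z)|^2K_2(1-|z|^2)$ has been rewritten after the change of variables $z\mapsto\varphi_a(z)$, exactly as in the proof of Corollary~\ref{c2}. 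Hence $I_g:Q_{K_1}\to Q_{K_2}$ is bounded if and only if the operators $\frac{d}{dz}:Q_{K_1}\to L^2(W_a\,dA)$ are bounded uniformly in $a\in\D$.

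\textbf{Part (i).} I will not quote Theorem~\ref{t1}(i) as a black box. Instead I will use its proof, which is what \cite[Theorems~1.2 and~1.7]{HZ26} does.
\begin{itemize}
\item[(1)] The sufficiency direction gives an estimate of the form
$$
\int_{\D}|f'|^2W\,dA\lesssim\|h^{K_1}_{W,2}\|_{B^2_{K_1}(\HD)}\|f\|^2_{Q_{K_1},*},
$$
with a constant depending only on $K_1$ and $\HD$.
\item[(2)] The necessity direction gives the reverse inequality
$$
\|h^{K_1}_{W,2}\|_{B^2_{K_1}(\HD)}\lesssim \|d/dz\|^2_{Q_{K_1}\to L^2(W dA)}.
$$
It is obtained by constructing test functions from arbitrary admissible $\{d_I\}$. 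For this I must check that the dyadic system $\HD$ can be fixed independently of $W$, for example one of the standard dyadic systems (or one of the two shifted ones) on $\T$, and that the constants are independent of $W$.
\end{itemize}
Taking the supremum over $a$ in both inequalities gives (i). The main point to verify is this uniformity: the constants in Theorem~\ref{t1} must depend only on $K_1$ and $q$, never on $W$. If the proof only produces the existence of some $\HD$ depending on $W$, I would instead work with a fixed finite collection of dyadic systems that covers all arcs (the one-third trick) and take the sum over that collection.

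\textbf{Part (ii).} I will use the standard criterion: $I_g$ is compact if and only if it is bounded and $\|I_gf_n\|_{Q_{K_2}}\to0$ for every bounded sequence $(f_n)$ in $Q_{K_1}$ with $f_n\to0$ locally uniformly.
\begin{itemize}
\item[(1)] Split each integral over $\{|z|\le\rho\}$ and $\{|z|>\rho\}$. On $\{|z|\le\rho\}$, $f_n'\to0$ uniformly. Moreover $\sup_a\int_{|z|\le\rho}W_a\,dA<\infty$, since $K_2$ is bounded on $[0,1]$ and $g$ is bounded on compact sets. So this part tends to $0$ uniformly in $a$.
\item[(2)] The tail part over $\{|z|>\rho\}$ is controlled by the uniform estimate (1) of part (i), applied to the truncated weights $(W_a)_\rho$. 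This gives the sufficiency of the vanishing condition.
\item[(3)] For necessity, I follow the compactness half of Theorem~\ref{t1}(ii), taking the supremum over $a$ together with all dyadic systems. Given $a$, $\HD$ and near-extremal $\{d_I\}$ supported on intervals with $|I|$ small (these are the ones that see $|z|>\rho$), build the test functions as in (2) of part (i). Such functions converge to $0$ locally uniformly as $\rho\to1$, so compactness forces the truncated gauge to vanish uniformly.
\end{itemize}
The delicate step is to show that these test functions are uniformly bounded in $Q_{K_1}$ and tend to $0$ on compact sets, uniformly in $a$. This again reduces to checking that the constants in the proof of Theorem~\ref{t1} do not depend on $W$.
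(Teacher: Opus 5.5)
Your proposal is correct and follows the paper's route: the paper likewise uses $I_gf(0)=0$, $(I_gf)'=gf'$ and the M\"obius change of variables to write $\|I_gf\|_{Q_{K_2}}^2=\sup_{a\in\D}\int_{\D}|f'|^2W_{g,K_2}(a,\cdot)\,dA$, and then invokes Theorem~\ref{t1} with $q=2$. The uniformity in $a$ that you flag is what the paper uses tacitly, and it does hold: the sufficiency and necessity arguments in Section~\ref{s2}, and the contradiction argument in Section~\ref{s3} (where you choose $a_n$ together with $\rho_n$ and $\HD_n$), work for every dyadic system with constants depending only on $K_1$ and not on $W$, so your one-third-trick fallback is unnecessary.
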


\begin{proof}
Since
$$
I_gf(0)=0
\qquad\text{and}\qquad
(I_gf)'(z)=g(z)f'(z),
$$
we have
$$
\begin{aligned}
\|I_gf\|_{Q_{K_2}}^2
&=
\sup_{a\in\D}
\int_{\D}
|f'(z)|^2|g(z)|^2
K_2\left(1-|\varphi_a(z)|^2\right)dA(z)\\
&=
\sup_{a\in\D}
\int_{\D}|f'(z)|^2
W_{g,K_2}(a,z)\,dA(z).
\end{aligned}
$$
Both assertions now follow from Theorem~\ref{t1}.
\end{proof}

The remainder of the paper is organized as follows. The boundedness and compactness assertions in Theorem~\ref{t1} are proved in Sections~\ref{s2} and~\ref{s3}, respectively, while the endpoint results, Theorems~\ref{20260726thm01} and~\ref{20260728thm02}, are established in Sections~\ref{sec04} and~\ref{Sec05}, respectively. \\
\noindent {\bf Acknowledgement.} The first author was supported by NSF grant DMS-2555999 and by the Simons Travel grant MPS-TSM-00007213. The second author was supported by NNSFC (12271149), Natural Science Foundation of Hunan Province (2024JJ2008). The third author was supported by NSERC of Canada \# 202979.

\section{Proof of Theorem~\ref{t1}: Boundedness}\label{s2}

We divide the proof of Theorem~\ref{t1} into two parts.

\subsection{Proof of Theorem~\ref{t1}: Sufficiency for boundedness}

Let
$$
\begin{cases}
f \in Q_K;\\
d_{I,K}(f) = \sup_{z\in Q_I^{\text{up}}} \big|f'(z)\big|^2 (|I|^2 K(|I|)).
\end{cases}
$$
For any $J \in \HD$, we have
\begin{align*}
\sum_{I \subseteq J}\frac{K\left(\frac{|I|}{|J|}\right)}{K(|I|)}d_{I,K}(f)
& = \sum_{I \subseteq J}\frac{K\left(\frac{|I|}{|J|}\right)}{K(|I|)} \sup_{z\in Q_I^{\text{up}}} \big|f'(z)\big|^2 (|I|^2 K(|I|))\\
& = \sum_{I \subseteq J} |I|^2 K\left(\frac{|I|}{|J|}\right) \sup_{z\in Q_I^{\text{up}}} \big|f'(z)\big|^2\\
& \lesssim  \sum_{I \subseteq J} K\left(\frac{|I|}{|J|}\right) \int_{\widetilde{Q_I^{up}}} |f'(z)|^2 dA(z),
\end{align*}
where in the last inequality we used the sub-mean value property for holomorphic functions, and $\widetilde{Q_I^{up}}$ denotes a fixed slight enlargement of $Q_I^{up}$ such that
$$\widetilde{Q_I^{up}} = 1.1 Q_I^{up}.$$

Note that
$$1 - |z| \simeq |I|~~\ \forall\ z \in \widetilde{Q_I^{up}}.$$ Thus, since $K$ is non-decreasing and doubling, we have
\begin{align*}
K\left(\frac{|I|}{|J|}\right) \simeq K\left(\frac{1-|z|}{|J|}\right).
\end{align*}
Since $\{\widetilde{Q_I^{up}}\}_{I \in \HD}$ has finite overlap, we obtain
\begin{align*}
\sum_{I \subseteq J}\frac{K\left(\frac{|I|}{|J|}\right)}{K(|I|)}d_{I,K}(f)
&\lesssim  \sum_{I \subseteq J} K\left(\frac{|I|}{|J|}\right) \int_{\widetilde{Q_I^{up}}} |f'(z)|^2 dA(z)\\
& \simeq \sum_{I \subseteq J} \int_{\widetilde{Q_I^{up}}} |f'(z)|^2 K\left(\frac{1-|z|}{|J|}\right) dA(z)\\
& \lesssim \|f\|_{Q_{K,*}}^2,
\end{align*}
whence
\begin{align*}
\sum_{I \subseteq J}\frac{K\left(\frac{|I|}{|J|}\right)}{K(|I|)} \frac{d_{I,K}(f)}{\|f\|_{Q_{K,*}}^2} \lesssim 1.
\end{align*}

Note that
\begin{align*}
\int_{\mathbb{D}} \big|f'(z)\big|^q W(z)\,dA(z)
&= \sum_{I\in \mathcal{D}} \int_{Q_I^{\text{up}}} \big|f'(z)\big|^q W(z)\,dA(z)\\
&\le \sum_{I\in \mathcal{D}} \left( \sup_{z\in Q_I^{\text{up}}} \big|f'(z)\big|^q \right) \cdot \int_{Q_I^{\text{up}}} W(z)\,dA(z)\\
& = \sum_{I\in \mathcal{D}} \left[ \sup_{z\in Q_I^{\text{up}}} \big|f'(z)\big|^2 (|I|^2 K(|I|)) \right]^{q/2} \frac{\int_{Q_I^{\text{up}}} W(z)\,dA(z)}{(|I|^2 K(|I|))^{q/2}}\\
& = \sum_{I\in \mathcal{D}} [d_{I,K}(f)]^{q/2} h_{W,q}^K(I),
\end{align*}
Thus, by the definition of the $Q_K$-capacity gauge, we have
\begin{align*}
\int_{\mathbb{D}} \big|f'(z)\big|^q W(z)\,dA(z)
\lesssim \|h_{W,q}^K\|_{B_K^q(\HD)} \|f\|_{Q_{K,*}}^q.
\end{align*}
This finishes the sufficiency part.

\subsection{Proof of Theorem \ref{t1}: Necessity for boundedness}

First, let
\begin{align*}
\HT_{K}:= \left\{ G \text{ measurable} : \ \|G\|_{\HT_{K}}^2 = \sup_{J\subseteq \mathbb{T}} \int_{Q_J} |G(z)|^2 K\left( \frac{1-|z|}{|J|} \right) dA(z) < +\infty \right\},
\end{align*}
so that
$$
f \in Q_K\iff f' \in \HT_{K}\ \&\ \|f\|_{Q_{K,*}} \simeq \|f'\|_{\HT_{K}},
$$
see \cite{EWX06, WZ17}.
Also, let
\[
\begin{cases}
b\gg 1;\\
B_b G(z) = \int_{\mathbb{D}} \frac{G(w)}{\big(1 - z \overline{w}\big)^{b+2}} dA_b(w);\\
dA_b(w) = C_b \big(1 - |w|^2\big)^b dA(w);\\
\text{$C_b>0$ = a normalizing constant}.
\end{cases}
\]

We first record six lemmas.

\vspace{0.1cm}

\noindent $\bullet$ The first is \cite[Theorem~3.9]{WZ17}, stated below.
\begin{lem}\label{integralestK}
Let $K$ satisfy conditions (a)--(d). Let $J \subseteq \T$ be an arc. If $s + \alpha \geq 1 + \sigma$, $s \geq \sigma$, $\alpha > 0$, and $K(t) = K(1)$ for $t \geq 1$, then there exist some $\beta \in (0,1)$ and a positive constant $C$ (independent of $J$ and $w$) such that
\[
\int_{\mathbb{D}} \frac{K\left(\dfrac{1-|z|}{|J|}\right) \big(1 - |w|^2\big)^{s-1}}{(1 - |z|)^{1-\alpha+\beta} \big|1 - \overline{w}z\big|^{s+\alpha}} dA(z)
\leq C \frac{K\left(\dfrac{1-|w|}{|J|}\right)}{(1 - |w|)^\beta}
\  \  \forall\  \
w \in \mathbb{D},
\]
where $\sigma > 0 $ is the number in condition (c).
\end{lem}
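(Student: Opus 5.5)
The plan is to reduce the estimate to a one–dimensional integral in the radial variable by factoring out the value of $K$ at the point $w$ through the function $\varphi_K$ of condition (c). Set $\delta:=1-|w|$ and $t:=\delta/|J|$. For $z\in\D$ write $1-|z|=u\delta$. The normalization $K(t)=K(1)$ for $t\ge 1$ and the definition of $\varphi_K$ give, after a case split according to whether $t\le 1$ or $t>1$ (the latter being trivial because $K$ is then constant and $K(u t)\le K(1)=K(t)$),
$$
K\Bigl(\frac{1-|z|}{|J|}\Bigr)=K(ut)\lesssim \widetilde\varphi(u)\,K\Bigl(\frac{1-|w|}{|J|}\Bigr),
\qquad \widetilde\varphi(u):=\min\{\varphi_K(u),\,C\max(1,u)^{\gamma}\}.
$$
Here $\gamma=\log_2 C_K$ comes from the doubling condition (d). This removes $J$ entirely. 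It then suffices to prove the $J$-free bound
$$
\int_{\D}\frac{\widetilde\varphi\bigl(\frac{1-|z|}{\delta}\bigr)\,\delta^{s-1+\beta}}{(1-|z|)^{1-\alpha+\beta}\,|1-\overline{w}z|^{s+\alpha}}\,dA(z)\le C .
$$

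Next pass to polar coordinates $z=re^{i\theta}$ and use the standard Forelli--Rudin estimate $\int_0^{2\pi}|1-\overline{w}re^{i\theta}|^{-c}\,d\theta\simeq(1-r|w|)^{1-c}$, valid for $c>1$. This is where $s+\alpha\ge 1+\sigma>1$ is used. Since $1-r|w|\simeq \delta+(1-r)=\delta(1+u)$, the substitution $1-r=\delta u$ shows that all powers of $\delta$ cancel: $(\alpha-1-\beta)+(s-1+\beta)+(1-s-\alpha)+1=0$. One is left with
$$
\int_0^{1/\delta}\widetilde\varphi(u)\,u^{\alpha-1-\beta}(1+u)^{1-s-\alpha}\,du .
$$
The range $0<u<1$ is controlled by $\int_0^1\varphi_K(u)u^{-1}\,du<\infty$ together with $u^{\alpha-\beta}\le 1$. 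This is the first constraint on $\beta$, namely $0<\beta<\alpha$.

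The main obstacle is the tail $u\ge 1$, where the integrand behaves like $\varphi_K(u)u^{-s-\beta}$. Condition (c) only gives integrability of $\varphi_K(u)u^{-1-\sigma}$, and $s+\beta$ need not exceed $1+\sigma$ under the sole hypothesis $s\ge\sigma$. My first attempt will be to not spend the whole power $|1-\overline{w}z|^{s+\alpha}$ on the $\theta$-integral in this region. For $1-|z|\ge\delta$ one has $|1-\overline{w}z|\gtrsim 1-|z|$, so one may move part of the exponent, $|1-\overline{w}z|^{-(s+\alpha)}\lesssim (1-|z|)^{-\epsilon}|1-\overline{w}z|^{-(s+\alpha-\epsilon)}$. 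This redistributes powers between $u^{\alpha-1-\beta}$ and $(1+u)^{1-s-\alpha}$ while keeping the $\theta$-exponent above $1$ (by the slack in $s+\alpha\ge1+\sigma$). Combined with the cutoff $u\le 1/\delta$ and the crude bound $\widetilde\varphi(u)\le\varphi_K(u)$, the aim is a tail of the form $\int_1^\infty\varphi_K(u)u^{-1-\sigma}\,du$.

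If that bookkeeping fails, the fallback is as follows. Use condition (c) at $0$ to extract a small exponent $c>0$ with $\varphi_K(u)\lesssim u^{c}$ for $u\le1$ (a standard consequence of $\int_0^1\varphi_K/u<\infty$ and monotonicity). Then choose $\beta\in(0,\min\{\alpha,c\})$ so as to balance both ends. Finally, for $u\ge1/\delta$ (where $1-|z|$ would exceed $1$), note that the region is empty; this is why the truncation at $1/\delta$ may absorb any residual growth.
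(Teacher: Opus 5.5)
\textbf{There is a genuine gap: the tail $u\ge1$ is never controlled, and your fallback chooses $\beta$ in the wrong direction.}

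Your reduction is correct up to that point. Comparing $K\bigl(\frac{1-|z|}{|J|}\bigr)$ with $K\bigl(\frac{1-|w|}{|J|}\bigr)$ via $\varphi_K$, integrating in $\theta$, and rescaling $1-r=\delta u$ leaves $\int_0^{1/\delta}\frac{K(ut)}{K(t)}\,u^{\alpha-1-\beta}(1+u)^{1-s-\alpha}\,du$. The range $0<u<1$ only needs $\beta<\alpha$. (For $t>1$ simply use $K(ut)\le K(t)$. Your $\widetilde\varphi$ is not a valid majorant there, since $\varphi_K(u)$ can be much smaller than $1$ for small $u$.)

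Neither of your plans closes the tail. Trading $(1-|z|)^{-\epsilon}$ against $|1-\overline{w}z|^{\epsilon}$ does not change the total homogeneity. After the $\theta$-integration the integrand for $u\ge1$ is still of size $\varphi_K(u)\,u^{-s-\beta}$, and since the angular estimate is two-sided, nothing was being wasted there.

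The fallback points the wrong way, because the tail forces a \emph{lower} bound on $\beta$. Take $|J|=\delta$. Then $K\bigl(\frac{1-|z|}{|J|}\bigr)=K(1)$ on $\{1-|z|\ge\delta\}$, and this region alone contributes $\simeq K(1)\,\delta^{s-1}\int_\delta^{1/2}\rho^{-s-\beta}\,d\rho$. That is $O(\delta^{-\beta})$ as $\delta\to0$ only if $s+\beta>1$. For instance, take $K(t)=t^{1/20}$ (with $K\equiv1$ on $[1,\infty)$), $\sigma=s=\frac1{10}$, $\alpha=1$. You then need $\beta>\frac9{10}$, while your choice $\beta<\min\{\alpha,c\}$ forces $\beta<\frac1{20}$, since $c\le\frac1{20}$ comes from the behaviour of $\varphi_K$ near $0$, which is irrelevant to the tail. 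This makes the inequality false. Nor can the cutoff $u\le1/\delta$ absorb residual growth: it depends on $w$, and the constant may not.

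\textbf{The missing ingredient is the self-improvement of condition (c) at infinity.} This is the small $c$ in the paper's remark after the lemma, which says $\beta$ may be any number in $(0,\min\{\alpha,1\})$ with $s-\sigma+\beta+c>1$. The paper cites the lemma from Wulan--Zhu rather than reproving it. The argument runs as follows.
\begin{itemize}
\item Because $K(t)=K(1)$ for $t\ge1$, $\varphi_K$ is nondecreasing and $\varphi_K(u_1u_2)\le\varphi_K(u_1)\varphi_K(u_2)$ for $u_1,u_2\ge1$ (split $K(u_1u_2t)/K(t)$ at $K(u_2t)$, using $\varphi_K(u_1)\ge1$).
\item Moreover $\varphi_K(u)\int_u^{2u}x^{-1-\sigma}\,dx\le\int_u^{2u}\varphi_K(x)x^{-1-\sigma}\,dx\to0$, so $\varphi_K(u)=o(u^\sigma)$.
\item Fix $u_0>1$ with $\varphi_K(u_0)\le\frac12u_0^\sigma$ and iterate over the blocks $[u_0^n,u_0^{n+1}]$. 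This gives $\varphi_K(u)\le u_0^{\sigma}u^{\sigma-c}$ for $u\ge1$, with $c=\log2/\log u_0>0$; shrink $c$ so that $c<\sigma$.
\item Then $\int_1^\infty\varphi_K(u)u^{-s-\beta}\,du<\infty$ whenever $s-\sigma+\beta+c>1$. Take $\beta\in\bigl(\max\{0,1+\sigma-c-s\},\min\{\alpha,1\}\bigr)$. This interval is non-empty exactly because $s+\alpha\ge1+\sigma$ and $s\ge\sigma$.
\item Since $c<\sigma$, this choice also gives $s+\beta>1$, which handles the case $t>1$.
\end{itemize}
In the boundary cases $s=\sigma$ or $s+\alpha=1+\sigma$, every admissible $\beta$ has $s+\beta<1+\sigma$. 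So condition (c) as stated cannot control the tail without this gain $c>0$.
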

In fact, the proof of \cite[Theorem~3.9]{WZ17} shows that $\beta$ may be any number in $(0, \min\{\alpha, 1\})$ such that $s - \sigma + \beta + c > 1$, where $0 < c < \sigma$ is sufficiently small.

\vspace{0.1cm}

\noindent $\bullet$ The second lemma concerns the boundedness of an integral operator.
\begin{lem}\label{SchurK}
Let
$$
\begin{cases}
\text{$J \subseteq \T$ be an arc};\\
b\gg 1;\\
(z,w)\in\mathbb D^2;\\
H_J(z,w) = \frac{c_b (1-|w|^2)^b}{(1-z\overline{w})^{b+2}} \frac{\left[K\left( \frac{1-|z|}{|J|}\right)\right]^{1/2}}{\left[K\left( \frac{1-|w|}{|J|}\right)\right]^{1/2}};\\
K_J(z,w) = |H_J(z,w)|;\\
Tf(z) = \int_\D K_J(z,w) f(w) dA(w).
\end{cases}
$$
Then $T$ is bounded on $L^2(\D)$.
\end{lem}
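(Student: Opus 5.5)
We prove Lemma~\ref{SchurK} with the Schur test, using a test function of the form $h(z)=(1-|z|^2)^{-\gamma}$ for a small $\gamma>0$. It suffices to find $\gamma$ and a constant $C$, independent of $J$, such that
\begin{align*}
\int_\D K_J(z,w)\,h(w)\,dA(w)\le C\,h(z),\qquad \int_\D K_J(z,w)\,h(z)\,dA(z)\le C\,h(w).
\end{align*}
Then $\|T\|_{L^2\to L^2}\le C$ uniformly in $J$. We first extend $K$ by $K(t)=K(1)$ for $t\ge1$; this is legitimate because $1-|z|$ may exceed $|J|$. This is also the normalization under which Lemma~\ref{integralestK} applies.

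The second inequality (integration in $z$) is set up exactly for Lemma~\ref{integralestK}. After cancelling the factor $K\bigl(\tfrac{1-|w|}{|J|}\bigr)^{-1/2}$, we must bound
\begin{align*}
\int_\D \frac{(1-|w|^2)^b\,K\left(\frac{1-|z|}{|J|}\right)^{1/2}}{|1-z\overline w|^{b+2}\,(1-|z|^2)^{\gamma}}\,dA(z).
\end{align*}
The function $K^{1/2}$ again satisfies conditions (a)--(d): it is monotone and doubling, $\varphi_{K^{1/2}}=\varphi_K^{1/2}$, and the size integrals still converge with $\sigma$ replaced by $\sigma/2$, using $\varphi_K(s)\le 1$ for $s\le1$ and $\varphi_K(s)\lesssim s^{\log_2 C_K}$ for $s\ge1$. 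So we apply Lemma~\ref{integralestK} to $K^{1/2}$ with $s-1=b$ and $s+\alpha=b+2$, i.e.\ $s=b+1$ and $\alpha=1$, and with $\beta=\gamma$ and $\beta\in(0,1)$ small. Because $b\gg1$, the conditions $s\ge\sigma$ and $s+\alpha\ge 1+\sigma$ hold, and so does $s-\sigma+\beta+c>1$. The lemma then gives the bound $C\,K\bigl(\tfrac{1-|w|}{|J|}\bigr)^{1/2}(1-|w|)^{-\gamma}$. Multiplying by the cancelled factor leaves $C(1-|w|)^{-\gamma}\simeq C\,h(w)$.

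The first inequality (integration in $w$) is the main obstacle, because the $K$-ratio now appears as $K\bigl(\tfrac{1-|z|}{|J|}\bigr)^{1/2}\big/K\bigl(\tfrac{1-|w|}{|J|}\bigr)^{1/2}$, with the unfavourable factor inside the integral. We handle it by comparing the ratio to a power.
\begin{itemize}
\item If $1-|w|\ge 1-|z|$, monotonicity of $K$ gives a ratio at most $1$.
\item If $1-|w|<1-|z|$, then iterating the doubling condition (and using $K$ constant beyond $1$) gives
\begin{align*}
\frac{K(t_1)}{K(t_2)}\lesssim \left(\frac{t_1}{t_2}\right)^{\tau},\qquad t_2\le t_1,\quad \tau=\log_2 C_K .
\end{align*}
\end{itemize}
In both cases
\begin{align*}
\frac{K\left(\frac{1-|z|}{|J|}\right)^{1/2}}{K\left(\frac{1-|w|}{|J|}\right)^{1/2}}\lesssim \left(\frac{1-|z|}{1-|w|}\right)^{\tau/2}+1 .
\end{align*}
The integral then splits into two standard Forelli--Rudin integrals. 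The first is
\begin{align*}
(1-|z|)^{\tau/2}\int_\D \frac{(1-|w|^2)^{b-\tau/2-\gamma}}{|1-z\overline w|^{b+2}}\,dA(w)\lesssim (1-|z|)^{\tau/2}(1-|z|)^{-\tau/2-\gamma}=(1-|z|)^{-\gamma},
\end{align*}
which is valid since $b-\tau/2-\gamma>-1$ and the exponent $b+2-(b-\tau/2-\gamma)-2>0$; this is where $b\gg1$ (in terms of $C_K$) is used. The second integral, without the power factor, is bounded by $(1-|z|)^{-\gamma}$ as soon as $0<\gamma<b+1$.

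Combining the two Schur inequalities shows that $T$ is bounded on $L^2(\D)$, with a bound independent of $J$, which is the uniformity needed later in the necessity argument.
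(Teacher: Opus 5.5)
You use the Schur test as the paper does, but with the $J$-independent test function $(1-|z|^2)^{-\gamma}$. This route can be made to work, but one step is justified incorrectly: the claim that $K^{1/2}$ satisfies the first size condition in (c), i.e.\ $\int_0^1\varphi_K(s)^{1/2}\,\frac{ds}{s}<\infty$.

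\textbf{The faulty step.} You cite $\varphi_K(s)\le 1$ for $s\le 1$, but that gives $\varphi_K^{1/2}\ge\varphi_K$ on $(0,1]$, which is the wrong direction. Monotonicity together with $\int_0^1\varphi_K(s)\,\frac{ds}{s}<\infty$ would also allow $\varphi(s)=\log^{-2}(e/s)$, whose square root is not integrable against $ds/s$.

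\textbf{How to repair it.} The conclusion is nevertheless true, for a reason you did not give.
\begin{itemize}
\item For $0<s_1,s_2\le 1$ one has $\varphi_K(s_1s_2)\le\varphi_K(s_1)\varphi_K(s_2)$. To see this, write $\frac{K(s_1s_2t)}{K(t)}=\frac{K(s_1s_2t)}{K(s_2t)}\cdot\frac{K(s_2t)}{K(t)}$ and note $s_2t\le 1$.
\item $\varphi_K$ is nondecreasing, and integrability forces $\varphi_K(s_0)\le\frac12$ for some $s_0\in(0,1)$.
\item Hence $\varphi_K(s)\le 2s^{c}$ on $(0,1]$, with $c=\log 2/\log(1/s_0)$.
\item Then $\varphi_K(s)^{1/2}\lesssim s^{c/2}$, which is integrable against $ds/s$.
\end{itemize}
This is the same power-decay fact that lies behind the constant $c$ in the remark after Lemma~\ref{integralestK}.

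At infinity, your bound $\varphi_K(s)\lesssim s^{\log_2 C_K}$ does not justify the exponent $\sigma/2$. You do not need it, though: $\varphi_K\ge 1$ there, so $\varphi_K^{1/2}\le\varphi_K$ and the original $\sigma$ works.

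With this repair the rest of your argument is correct and uniform in $J$. That covers the application of Lemma~\ref{integralestK} with $s=b+1$, $\alpha=1$, $\beta=\gamma$. It also covers the doubling comparison $K(t_1)/K(t_2)\lesssim (t_1/t_2)^{\tau}$, which is compatible with the constant extension beyond $1$, followed by the two Forelli--Rudin integrals.

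\textbf{Comparison with the paper.} The paper uses the $J$-dependent test function $h_J(z)=K^{1/2}\bigl(\frac{1-|z|}{|J|}\bigr)(1-|z|)^{-\eta}$, which absorbs the $K$-ratio.
\begin{itemize}
\item In the $w$-integral the $K$-factors cancel exactly, leaving a plain Forelli--Rudin estimate.
\item In the $z$-integral the full $K$ appears, so Lemma~\ref{integralestK} is applied to $K$ itself, with $s+\alpha=b+2$ and $1-\alpha+\beta=\eta$.
\end{itemize}
Your $J$-independent test function shifts the burden. You need admissibility of $K^{1/2}$ for the $z$-integral, and a doubling power comparison for the unfavourable ratio in the $w$-integral. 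The paper's choice is shorter and uses only the hypotheses on $K$ directly. Yours is more hands-on but requires the extra power-decay fact above.
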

\begin{proof}
We use the Schur test (\cite[Corollary 3.7]{Z07}) to prove the lemma.

For a fixed number $0 < \eta < 1$, let
$$h_J(z) = \frac{K^{1/2}\left( \frac{1-|z|}{|J|}\right)}{(1-|z|)^\eta}.
$$
We first have
\begin{align*}
\int_\D K_J(z,w) h_J(w) dA(w)
& = \int_\D \frac{c_b (1-|w|^2)^b}{|1-z\overline{w}|^{b+2}} \frac{\left[K\left( \frac{1-|z|}{|J|}\right)\right]^{1/2}}{\left[K\left( \frac{1-|w|}{|J|}\right)\right]^{1/2}} \frac{K^{1/2}\left( \frac{1-|w|}{|J|}\right)}{(1-|w|)^\eta} dA(w)\\
& = K^{1/2}\left( \frac{1-|z|}{|J|}\right)\int_\D \frac{c_b (1-|w|^2)^b}{|1-z\overline{w}|^{b+2}} \frac{1}{(1-|w|)^\eta} dA(w)\\
& \lesssim \frac{K^{1/2}\left( \frac{1-|z|}{|J|}\right)}{(1-|z|)^\eta} \quad \text{ by \cite[Lemma 3.10]{Z07}}\\
& = h_J(z),
\end{align*}
and also
\begin{align*}
\int_\D K_J(z,w) h_J(z) dA(z)& = \int_\D \frac{c_b (1-|w|^2)^b}{|1-z\overline{w}|^{b+2}} \frac{\left[K\left( \frac{1-|z|}{|J|}\right)\right]^{1/2}}{\left[K\left( \frac{1-|w|}{|J|}\right)\right]^{1/2}} \frac{K^{1/2}\left( \frac{1-|z|}{|J|}\right)}{(1-|z|)^\eta} dA(z)\\
& = \frac{c_b (1-|w|^2)^b}{K^{1/2}\left( \frac{1-|w|}{|J|}\right)}\int_\D \frac{K\left( \frac{1-|z|}{|J|}\right)}{|1-z\overline{w}|^{b+2}} \frac{1}{(1-|z|)^\eta} dA(z)\\
& \lesssim \frac{c_b (1-|w|^2)^b}{K^{1/2}\left( \frac{1-|w|}{|J|}\right)} \frac{K\left(\dfrac{1-|w|}{|J|}\right)}{(1 - |w|)^{\beta+s-1}},
\end{align*}
where in the last inequality we used Lemma \ref{integralestK} with
$$
\begin{cases} s + \alpha = b+2;\\
1 - \alpha + \beta = \eta.
\end{cases}
$$
 We may take $b > 1$ sufficiently large so that
$$s + \alpha = b+2 \geq 1 + \sigma,\  \  s \geq \sigma,\  \  \alpha > 0$$
and
$$\beta = \eta + \alpha - 1 \in (0, \min\{\alpha, 1\})\ \text{such that}\ s - \sigma + \beta + c > 1,
$$
where $0 < c < \sigma$ is sufficiently small. Then by
$$\beta + s -1 - b = \eta,$$
we obtain
$$\int_\D K_J(z,w) h_J(z) dA(z) \lesssim \frac{K^{1/2}\left( \frac{1-|w|}{|J|}\right)}{(1-|w|)^\eta} = h_J(w).$$
Thus $T$ is bounded on $L^2(\D)$.
\end{proof}

\noindent $\bullet$ The third lemma is the following result.

\begin{lem}\label{bddnessbergproj}
For $b \gg 1$ there holds
\begin{align*}
\|B_b G\|_{\HT_{K}} \lesssim \|G\|_{\HT_{K}}.
\end{align*}
\end{lem}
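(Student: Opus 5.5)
Fix an arc $J\subseteq\T$ and $G\in\HT_K$. We must show
$$
\int_{Q_J}|B_bG(z)|^2K\left(\frac{1-|z|}{|J|}\right)dA(z)\lesssim\|G\|_{\HT_K}^2
$$
with a constant independent of $J$. As usual we assume $K(t)=K(1)$ for $t\ge1$; this does not change $\HT_K$ up to equivalence, and it allows Lemmas~\ref{integralestK} and~\ref{SchurK} to be applied.

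Let $\widetilde J$ be the concentric arc with $|\widetilde J|=2|J|$, or all of $\T$ if that is larger. We split
$$
G=G\one_{Q_{\widetilde J}}+G\one_{\D\setminus Q_{\widetilde J}}=:G_1+G_2,
$$
and estimate $B_bG_1$ and $B_bG_2$ on $Q_J$ separately.

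\emph{Local part.} Write $\omega_J(z)=K^{1/2}\bigl(\frac{1-|z|}{|J|}\bigr)$. Then
$$
\omega_J(z)|B_bG_1(z)|\le\int_\D K_J(z,w)\,\omega_J(w)|G_1(w)|\,dA(w)=T(\omega_J|G_1|)(z),
$$
with $K_J$ and $T$ as in Lemma~\ref{SchurK}. Lemma~\ref{SchurK} gives
$$
\int_{Q_J}|B_bG_1|^2\omega_J^2\,dA\lesssim\int_{Q_{\widetilde J}}|G|^2K\left(\frac{1-|w|}{|J|}\right)dA(w).
$$
Since $|\widetilde J|=2|J|$, the doubling condition (d) together with monotonicity of $K$ gives $K\bigl(\frac{1-|w|}{|J|}\bigr)\lesssim K\bigl(\frac{1-|w|}{|\widetilde J|}\bigr)$. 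Hence the right-hand side is at most a constant times $\|G\|_{\HT_K}^2$.

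\emph{Tail part.} For $z\in Q_J$ and $w\notin Q_{\widetilde J}$ we have $|1-z\overline w|\gtrsim |J|+(1-|w|)+\mathrm{dist}(w/|w|,J)$. Decompose $\D\setminus Q_{\widetilde J}$ into dyadic annular pieces $R_k=Q_{2^{k+1}J}\setminus Q_{2^kJ}$ for $k\ge1$. On $R_k$, $|1-z\overline w|\gtrsim 2^k|J|$. By Cauchy--Schwarz,
$$
\int_{R_k}\frac{|G(w)|(1-|w|^2)^b}{|1-z\overline w|^{b+2}}dA(w)\lesssim(2^k|J|)^{-b-2}\left(\int_{Q_{2^{k+1}J}}|G|^2K\left(\frac{1-|w|}{2^{k+1}|J|}\right)dA\right)^{1/2}\left(\int_{Q_{2^{k+1}J}}\frac{(1-|w|)^{2b}}{K\bigl(\frac{1-|w|}{2^{k+1}|J|}\bigr)}dA\right)^{1/2}.
$$
To control the last factor, write $K(t)\ge t^{\,p}K(1)/\varphi_K(1/t)$-type bounds. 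Condition (c) forces $K(t)\gtrsim t^{1-\epsilon}$ on $(0,1]$ with a gain from $\int_0^1\varphi_K(s)/s\,ds<\infty$. With $b$ large, the last factor is then $\lesssim(2^k|J|)^{b+1}$. This yields the pointwise bound
$$
|B_bG_2(z)|\lesssim\sum_k(2^k|J|)^{-1}\|G\|_{\HT_K}\lesssim|J|^{-1}\|G\|_{\HT_K}, \qquad z\in Q_J.
$$
Finally,
$$
\int_{Q_J}K\left(\frac{1-|z|}{|J|}\right)dA\lesssim|J|^2\int_0^1K(t)\,dt\lesssim|J|^2,
$$
by condition (b). Combining these gives the bound for $B_bG_2$.

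The main obstacle is the tail estimate, specifically controlling $\int (1-|w|)^{2b}/K(\cdot)$ and summing in $k$ uniformly. For this step I would first use the elementary consequence of condition (c), $K(st)\ge K(t)/\varphi_K(1/s)$, together with the polynomial growth $\varphi_K(s)\lesssim s^{\sigma'}$ for $s\ge1$. If that bound proves too crude, I would instead redo the tail with the weighted Schur/Lemma~\ref{integralestK} estimate, replacing $J$ by $2^kJ$.
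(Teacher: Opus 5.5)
Your proof follows the paper's argument. The local piece is handled by the Schur-test Lemma~\ref{SchurK} plus doubling. The tail is handled by concentric dyadic annuli (capped at $\T$, as in the paper's $\min\{2^n|J|,1\}$), $|1-z\overline w|\gtrsim 2^k|J|$, Cauchy--Schwarz, and the finiteness of $\int_0^1 t^{2b}/K(t)\,dt$. Summing the tail pointwise instead of using Minkowski's inequality is an inessential variation.

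One sentence is false, though nothing depends on it: ``condition (c) forces $K(t)\gtrsim t^{1-\epsilon}$.''
\begin{itemize}
\item The condition $\int_0^1\varphi_K(s)/s\,ds<\infty$ gives an \emph{upper} bound $K(t)\lesssim t^{c}$, not a lower bound.
\item $K(t)=t^2$ satisfies (a)--(d), with $\sigma=3$, yet violates $K(t)\gtrsim t^{1-\epsilon}$.
\item All you need, since $b\gg1$, is a polynomial lower bound of some degree. Your fallback supplies this correctly: $K(t)\ge K(1)/\varphi_K(1/t)$, together with $\varphi_K(s)\lesssim s^{\sigma}$ for $s\ge1$. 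The latter follows from the second half of (c) and the monotonicity of $\varphi_K$.
\item This is essentially Lemma~\ref{estimateintegralnpo}. Doubling alone also gives $K(t)\gtrsim t^{\log_2 C_K}$.
\end{itemize}

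A minor point: the bound $\int_0^1K(t)\,dt\le K(1)$ comes from monotonicity of $K$, not from condition (b).
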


\begin{proof}
Without loss of generality, we assume $\|G\|_{\HT_{K}} = 1$. It suffices to show that for any $J \subseteq \T$,
\begin{align*}
\int_{Q_J} |B_bG(z)|^2 K\left( \frac{1-|z|}{|J|} \right) dA(z) \lesssim 1.
\end{align*}

Let $J^{(n)} \subseteq \mathbb{T}$ be the arc with the same center as $J$ and with length
\[
|J^{(n)}| \simeq \min\{2^n |J|, 1\}.
\]
If $J^{(n)} = \mathbb{T}$, then $J^{(m)}$ equals $\mathbb{T}$ for all larger $m$. Write
\[
G_0 = G \one_{Q_{J^{(2)}}}, \qquad G_n = G \one_{Q_{J^{(n+1)}} \setminus Q_{J^{(n)}}}, \quad n \geq 2.
\]
Then
\begin{equation*}
B_b G(z) = B_b G_0(z) + \sum_{n\geq 2} B_b G_n(z), \qquad z \in \mathbb{D}.
\end{equation*}
Thus, it suffices to show that
\begin{align}\label{Bbg0bdd}
\int_{Q_J} |B_bG_0(z)|^2 K\left( \frac{1-|z|}{|J|} \right) dA(z) \lesssim 1,
\end{align}
and
\begin{align}\label{Bbsumgnbdd}
\int_{Q_J} \left|\sum_{n=2}^\infty B_bG_n(z)\right|^2 K\left( \frac{1-|z|}{|J|} \right) dA(z) \lesssim 1.
\end{align}

We first prove (\ref{Bbg0bdd}). Let
\begin{align*}
H_J(z,w) = \frac{c_b (1-|w|^2)^b}{(1-z\overline{w})^{b+2}} \frac{\left[K\left( \frac{1-|z|}{|J|}\right)\right]^{1/2}}{\left[K\left( \frac{1-|w|}{|J|}\right)\right]^{1/2}}, \quad K_J(z,w) = |H_J(z,w)|.
\end{align*}
We have
\begin{align*}
&\int_{Q_J} |B_bG_0(z)|^2 K\left( \frac{1-|z|}{|J|} \right) dA(z)\\
& \leq \int_\D |B_bG_0(z)|^2 K\left( \frac{1-|z|}{|J|} \right) dA(z)\\
& = \int_\D \left|\int_\D G_0(w) \frac{c_b (1-|w|^2)^b}{(1-z\overline{w})^{b+2}} K^{1/2}\left( \frac{1-|z|}{|J|} \right) dA(w) \right|^2 dA(z)\\
& = \int_\D \left|\int_\D G_0(w) K^{1/2}\left( \frac{1-|w|}{|J|} \right) H_J(z,w) dA(w) \right|^2 dA(z)\\
& \leq \int_\D \left|\int_\D |G_0(w)| K^{1/2}\left( \frac{1-|w|}{|J|} \right) K_J(z,w) dA(w) \right|^2 dA(z)\\
& \lesssim \int_\D |G_0(w)|^2 K\left( \frac{1-|w|}{|J|} \right)  dA(w) \quad (\text{ by Lemma \ref{SchurK}}\ )\\
& = \int_{Q_{J^{(2)}}} |G(w)|^2 K\left( \frac{1-|w|}{|J|} \right)  dA(w)\\
& \lesssim \int_{Q_{J^{(2)}}} |G(w)|^2 K\left( \frac{1-|w|}{|J^{(2)}|} \right)  dA(w) \quad (\text{ by the fact that}\  K \text{ is doubling}\ )\\
& \lesssim \|G\|_{\HT_K}^2 = 1.
\end{align*}
Now we prove (\ref{Bbsumgnbdd}). By Minkowski inequality, we have
\begin{align}\label{minkowskiestsum}
&\int_{Q_J} \left|\sum_{n=2}^\infty B_bG_n(z)\right|^2 K\left( \frac{1-|z|}{|J|} \right) dA(z) \notag\\
& \lesssim \left[ \sum_{n=2}^\infty \left(\int_{Q_J} |B_b G_n(z)|^2 K\left( \frac{1-|z|}{|J|} \right) dA(z) \right)^{1/2}\right]^2.
\end{align}
Since
$$
\begin{cases} G_n = G \one_{Q_{J^{(n+1)}} \setminus Q_{J^{(n)}}};\\
|1 - z\overline{w}| \simeq (1 - |z|) + (1 - |w|) + \left| \frac{z}{|z|} - \frac{w}{|w|} \right|
\quad \forall\quad z,w \in \mathbb{D}\setminus\{0\},
\end{cases}
$$
we have
\[
|1 - z\overline{w}| \gtrsim \min\{2^n |J|, 1\}\  \text{when $z \in Q_J$ and $w \in Q_{J^{(n+1)}} \setminus Q_{J^{(n)}}$},
\]
and hence
\begin{align*}
&|B_b G_n(z)| \\
&\lesssim \int_{Q_{J^{(n+1)}}\setminus Q_{J^{(n)}}} \frac{|G(w)|}{|1-z\overline{w}|^{b+2}} (1-|w|^2)^b dA(w)\\
&\lesssim \frac{1}{(\min\{2^n |J|, 1\})^{b+2}} \int_{Q_{J^{(n+1)}}} |G(w)| (1-|w|^2)^b dA(w)\\
&\leq \frac{1}{(\min\{2^n |J|, 1\})^{b+2}}
\left(\int_{Q_{J^{(n+1)}}} |G(w)|^2
K\left( \frac{1-|w|}{|J^{(n+1)}|} \right) dA(w)\right)^{1/2}
\notag\\
&\qquad {}\times
\left(\int_{Q_{J^{(n+1)}}}
\frac{(1-|w|^2)^{2b}}
{K\left( \frac{1-|w|}{|J^{(n+1)}|} \right)}
dA(w)\right)^{1/2}\\
&\lesssim \frac{1}{\min\{2^n |J|, 1\}} \left(\int_{Q_{J^{(n+1)}}} |G(w)|^2 K\left( \frac{1-|w|}{|J^{(n+1)}|} \right) dA(w)\right)^{1/2}\\
&\lesssim \frac{\|G\|_{\HT_K}}{\min\{2^n |J|, 1\}} = \frac{1}{\min\{2^n |J|, 1\}},
\end{align*}
where in the second to the last estimate we used Lemma \ref{estimateintegralnpo} below. Furthermore, by (\ref{minkowskiestsum}), we have
\begin{align*}
&\int_{Q_J} \left|\sum_{n=2}^\infty B_bG_n(z)\right|^2 K\left( \frac{1-|z|}{|J|} \right) dA(z) \\
& \lesssim \left[ \sum_{n=2}^\infty \left(\int_{Q_J} |B_b G_n(z)|^2 K\left( \frac{1-|z|}{|J|} \right) dA(z) \right)^{1/2}\right]^2\\
& \lesssim \left[ \sum_{n=2}^\infty \left(\int_{Q_J}  \frac{1}{(\min\{2^n |J|, 1\})^2} K\left( \frac{1-|z|}{|J|} \right) dA(z) \right)^{1/2}\right]^2\\
& \lesssim \left[ \sum_{n=2}^\infty \frac{1}{\min\{2^n |J|, 1\}}\left(\int_{Q_J}  K(1) dA(z) \right)^{1/2}\right]^2\\
& \lesssim \left[ \sum_{n=2}^\infty \frac{|J|}{\min\{2^n |J|, 1\}}\right]^2\\
& \lesssim 1,
\end{align*}
Indeed, the terms for which $2^n|J|\leq 1$ form a geometric series, while
only $O(1)$ nonzero terms occur once $2^n|J|>1$.
This proves (\ref{Bbsumgnbdd}) and finishes the proof.
\end{proof}

\noindent $\bullet$ The fourth lemma is the following result.
\begin{lem}\label{estimateintegralnpo}
Let $J \subseteq \T$ be an arc. If $J^{(n)} \subseteq \mathbb{T}$ is the arc with the same center as $J$ and with length
\[
|J^{(n)}| \simeq \min\{2^n |J|, 1\},
\]
then
\begin{align*}
\int_{Q_{J^{(n+1)}}} \frac{(1-|w|^2)^{2b}}{K\left( \frac{1-|w|}{|J^{(n+1)}|} \right)} dA(w) \lesssim |J^{(n+1)}|^{2b+2} \simeq (\min\{2^n |J|, 1\})^{2b+2}~~\ \forall\ b\gg 1.
\end{align*}
\end{lem}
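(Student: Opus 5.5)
The plan is to reduce the estimate to a one-dimensional integral in the radial variable and then use the size condition (c) on $K$ to control $1/K$ near $0$.

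\textbf{Step 1: reduction to one variable.} Write $L=|J^{(n+1)}|$. In the tent $Q_{J^{(n+1)}}$ we have $1-|w|\le L$, and the angular extent of $Q_{J^{(n+1)}}$ at each radius is about $L$. Using polar coordinates with $t=1-|w|$, and noting $(1-|w|^2)^{2b}\simeq t^{2b}$, we get
\[
\int_{Q_{J^{(n+1)}}}\frac{(1-|w|^2)^{2b}}{K\left(\frac{1-|w|}{L}\right)}\,dA(w)\lesssim L\int_0^{L}\frac{t^{2b}}{K(t/L)}\,dt .
\]
Substituting $t=Ls$ turns the right-hand side into
\[
L^{2b+2}\int_0^1\frac{s^{2b}}{K(s)}\,ds .
\]
So it suffices to show that $\int_0^1 s^{2b}/K(s)\,ds<\infty$ for $b\gg1$. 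If $L$ is comparable to $1$, i.e.\ $J^{(n+1)}=\T$, the same computation applies with the whole disc in place of the tent, and the bound $|J^{(n+1)}|^{2b+2}\simeq 1$ is consistent.

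\textbf{Step 2: a lower bound for $K$ near $0$.} This is the key step. By definition, $\varphi_K(s)\ge K(s\cdot 1)/K(1)$ for $s\in(0,1]$, which is an upper bound on $K(s)$, not the lower bound we need. Instead we obtain a lower bound by writing $K(1)=K(s\cdot (1/s))$ and using doubling (condition (d)) repeatedly. Iterating $K(2t)\le C_K K(t)$ gives
\[
K(1)\le C_K^{\lceil\log_2(1/s)\rceil}K(s),
\]
hence
\[
K(s)\gtrsim s^{\log_2 C_K}K(1).
\]
Therefore $s^{2b}/K(s)\lesssim s^{2b-\log_2 C_K}$, which is integrable on $(0,1)$ as soon as $2b>\log_2 C_K-1$. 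This is exactly the requirement $b\gg1$.

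\textbf{Step 3: combining.} Combining Steps 1 and 2 gives the bound $\lesssim L^{2b+2}$. Since $L\simeq\min\{2^n|J|,1\}$, the stated estimate follows. The only delicate point is Step 2, namely that $1/K$ has at most polynomial growth at $0$; doubling provides this directly, so I do not expect a real obstacle. The remaining care is tracking the convention $K(t)=K(1)$ for $t\ge1$, which is not needed here because $s\le1$ throughout.
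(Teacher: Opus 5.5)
Your proposal is correct. Step 1 is exactly the paper's reduction to $|J^{(n+1)}|^{2b+2}\int_0^1 t^{2b}/K(t)\,dt$, but Step 2 takes a different route.

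The paper controls $1/K$ near $0$ through the size condition (c). Taking the point $t$ itself in the supremum defining $\varphi_K(1/t)$ gives $K(1)/K(t)\le\varphi_K(1/t)$ for $0<t\le 1$. After substituting $s=1/t$, the integral is bounded by $K(1)^{-1}\int_1^\infty\varphi_K(s)s^{-2b-2}\,ds\le K(1)^{-1}\int_1^\infty\varphi_K(s)s^{-1-\sigma}\,ds<\infty$ once $2b+1\ge\sigma$. So your remark that $\varphi_K$ only gives an upper bound on $K$ is mistaken: evaluated at $1/t\ge 1$ rather than at a point of $(0,1]$, it yields precisely the lower bound you wanted.

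Your doubling argument is nonetheless valid and more elementary. With $m=\lceil\log_2(1/s)\rceil$ one has $2^{-m}\le s$. Applying (d) at the points $2^{-m},\dots,2^{-1}$, all of which are $\le 1/2$ as (d) requires, and then using monotonicity gives $K(1)\le C_K^m K(2^{-m})\le C_K^m K(s)$. Note that $C_K\ge 1$ automatically, since $K$ is nondecreasing and positive. Hence $K(s)\ge C_K^{-1}K(1)\,s^{\log_2 C_K}$, and $s^{2b}/K(s)$ is integrable on $(0,1)$ once $2b+1>\log_2 C_K$.

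The difference lies only in how ``$b\gg 1$'' is quantified. The paper phrases its threshold through the same $\sigma$ from (c) that governs the choice of $b$ in Lemma~\ref{SchurK} (via Lemma~\ref{integralestK}), which keeps the requirements on $b$ uniform across the lemmas. Your route uses only monotonicity and (d), and gives an explicit power-type lower bound on $K$. Since $b$ is in any case chosen larger than finitely many thresholds, either argument suffices.
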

\begin{proof}
We have
\begin{align*}
\int_{Q_{J^{(n+1)}}} \frac{(1-|w|^2)^{2b}}{K\left( \frac{1-|w|}{|J^{(n+1)}|} \right)} dA(w)
& \simeq |J^{(n+1)}| \int_0^{|J^{(n+1)}|} \frac{t^{2b}}{K\left( \frac{t}{|J^{(n+1)}|}\right)}dt\\
& = |J^{(n+1)}| \int_0^1 \frac{|J^{(n+1)}|^{2b}t^{2b}}{K(t)} |J^{(n+1)}| dt\\
& = |J^{(n+1)}|^{2b+2} \int_0^1 \frac{t^{2b}}{K(t)}dt.
\end{align*}
Now we show
$$
\int_0^1 \frac{t^{2b}}{K(t)}dt < \infty~~\  \forall\  b \gg 1.
$$

Recall that the function
\[
(0,\infty)\ni s\mapsto \varphi_K(s) = \sup_{0 < t \le 1} \frac{K(s t)}{K(t)}
\]
satisfies
\[
\int_{1}^{+\infty} \frac{\varphi_K(s)}{s^{1+\sigma}} ds < +\infty \quad \text{for some } \sigma>0.
\]
So for any \(0 < t \le 1\), we have
\[
\varphi_K\!\left(\frac{1}{t}\right) \ge \frac{K(1)}{K(t)}\  \&\
\frac{1}{K(t)} \le \frac{1}{K(1)} \cdot \varphi_K\!\left(\frac{1}{t}\right).
\]
It follows that
\begin{align}\label{estimateintegraltabkt}
\int_0^1 \frac{t^{2b}}{K(t)}dt&\leq \int_0^1 t^{2b} \frac{1}{K(1)} \varphi_K\left(\frac{1}{t}\right) dt\\
& = \frac{1}{K(1)} \int_1^\infty \frac{1}{s^{2b}} \varphi_K(s) \frac{ds}{s^2} \notag\\
& = \frac{1}{K(1)} \int_1^\infty \frac{\varphi_K(s)}{s^{2b+2}} ds \notag\\
& \leq \frac{1}{K(1)} \int_1^\infty \frac{\varphi_K(s)}{s^{1+\sigma}} ds < \infty \notag,
\end{align}
where in the last estimate we used
$$b\gg 1\  \&\  2b+2 \geq 1 + \sigma.
$$
This estimate is consistent with what we have used in the proof of Lemma \ref{SchurK}.
\end{proof}

\noindent $\bullet$ For the fifth lemma, let $\delta > 0$ be sufficiently small. For each $I \in \mathcal{D}$, choose a finite partition
\begin{equation*}
Q_I^{\mathrm{up}} = \bigcup_{\nu=1}^{N_0} Q_{I,\nu}
\end{equation*}
where $N_0 = N_0(\delta)$ is independent of $I$, and each $Q_{I,\nu}$ is a ``truncated'' Whitney-type tent with Euclidean diameter at most $\delta|I|$.
We choose $\delta$ sufficiently small so that, on each product $Q_{I,\nu} \times Q_{I,\nu}$, the kernel $\frac{1}{(1-z\overline{w})^{b+2}}$ has essentially constant argument.

\begin{lem}\label{whitneyest}
Let
\begin{equation}\label{defbinv}
\begin{cases}
A_K(Q_{I,\nu}) = \int_{Q_{I,\nu}} K(1-|z|) dA(z);\\
b_{I,\nu}= \big(A_K(Q_{I,\nu})\big)^{-\frac12}\one_{Q_{I,\nu}}.
\end{cases}
\end{equation}
Then for $b\gg 1$ there holds
$$
\big|B_b b_{I,\nu}(z)\big| \gtrsim \frac{1}{|I|\,K(|I|)^{1/2}}, \qquad \forall z\in Q_{I,\nu}.
$$
\end{lem}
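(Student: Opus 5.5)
We write
$$
B_b b_{I,\nu}(z)=\frac{C_b}{A_K(Q_{I,\nu})^{1/2}}\int_{Q_{I,\nu}}\frac{(1-|w|^2)^b}{(1-z\overline{w})^{b+2}}\,dA(w),\qquad z\in Q_{I,\nu},
$$
and estimate the modulus of the integral from below. The strategy is to show that the integrand has essentially constant argument on $Q_{I,\nu}$, so that no cancellation occurs and the modulus of the integral is comparable to the integral of the modulus.

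\textbf{Step 1: no cancellation.} By the choice of $\delta$, for $z,w\in Q_{I,\nu}$ the argument of $(1-z\overline{w})^{-(b+2)}$ stays within an interval of length at most $\pi/3$ around some fixed angle $\theta_{I,\nu}(z)$. We justify this from the smallness of the diameter. Write $1-z\overline{w}=(1-|w|^2)+\overline{w}(w-z)$. Then
$$
\left|\arg(1-z\overline{w})\right|\le \arcsin\!\Big(\frac{|w-z|}{1-|w|^2}\Big)\lesssim \frac{\delta|I|}{|I|}=\delta,
$$
since $1-|w|\simeq|I|$ on $Q_I^{\rm up}$. Hence $|\arg(1-z\overline w)^{-(b+2)}|\lesssim (b+2)\delta$. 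With $b$ fixed first, choosing $\delta\ll 1/b$ makes this at most $\pi/3$. Consequently the real part of $e^{-i\theta}(1-z\overline w)^{-(b+2)}$ dominates $\tfrac12|1-z\overline w|^{-(b+2)}$, and
$$
\Big|\int_{Q_{I,\nu}}\frac{(1-|w|^2)^b}{(1-z\overline{w})^{b+2}}dA(w)\Big|\ge \frac12\int_{Q_{I,\nu}}\frac{(1-|w|^2)^b}{|1-z\overline{w}|^{b+2}}dA(w).
$$
This is the delicate step: $\delta$ must depend on $b$, and $N_0=N_0(\delta)$ stays independent of $I$ because the configuration is scale-invariant.

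\textbf{Step 2: size estimates.} For $z,w\in Q_I^{\rm up}$ we have $|1-z\overline w|\simeq |I|$, since $1-|w|\simeq|I|$ and $|z-w|\lesssim|I|$. Also $1-|w|^2\simeq|I|$. Each $Q_{I,\nu}$ comes from a fixed-proportion partition of a Whitney box, so $|Q_{I,\nu}|\simeq |I|^2$, with constants depending on $\delta$ only. The integral in Step 1 is therefore $\gtrsim |I|^{b}|I|^{-(b+2)}|I|^2=1$.

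On the other hand, $K$ is non-decreasing and doubling, and $1-|w|\in[|I|/2,|I|]$ on $Q_I^{\rm up}$. This gives $K(1-|w|)\simeq K(|I|)$, and hence $A_K(Q_{I,\nu})\simeq |I|^2K(|I|)$.

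\textbf{Step 3: conclusion.} Combining Steps 1 and 2,
$$
|B_b b_{I,\nu}(z)|\gtrsim \big(|I|^2K(|I|)\big)^{-1/2}=\frac{1}{|I|\,K(|I|)^{1/2}}.
$$
The implied constants depend only on $b$, $\delta$ and $C_K$, and not on $I$ or $\nu$.
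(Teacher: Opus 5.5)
Your proof is correct and follows essentially the same route as the paper. You rule out cancellation in the integral over $Q_{I,\nu}$ using the small diameter, then use $|1-z\overline{w}|\simeq 1-|w|^2\simeq |I|$, $|Q_{I,\nu}|\simeq |I|^2$, and $A_K(Q_{I,\nu})\simeq |I|^2K(|I|)$ (monotonicity plus doubling). The only difference is that you make the no-cancellation step explicit: writing $1-z\overline{w}=(1-|w|^2)+\overline{w}(w-z)$ gives $|\arg(1-z\overline{w})^{-(b+2)}|\le (b+2)\arcsin(2\delta)$, which shows that $\delta$ must be chosen after $b$. That is exactly what the paper builds into its choice of $\delta$ before the lemma.
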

\begin{proof}
By the definition of $b_{I,\nu}$,
\begin{align*}
\big|B_b b_{I,\nu}(z)\big|& =
\left| \int_{Q_{I,\nu}}\frac{1}{A_K(Q_{I,\nu})^{1/2}} \frac{c_b(1-|w|^2)^b}{(1-z\overline{w})^{b+2}}dA(w)\right|\\
& \gtrsim \int_{Q_{I,\nu}}\frac{1}{A_K(Q_{I,\nu})^{1/2}} \frac{|I|^b}{(|I|)^{b+2}}dA(w)\\
& \simeq \frac{1}{A_K(Q_{I,\nu})^{\frac12}},
\end{align*}
Moreover,
\begin{align*}
A_K(Q_{I,\nu})& = \int_{Q_{I,\nu}} K(1-|z|) dA(z)\simeq K(|I|) \int_{Q_{I,\nu}} dA(z) \simeq K(|I|) |I|^2,
\end{align*}
Combining these estimates, we get
$$\big|B_b b_{I,\nu}(z)\big| \gtrsim \frac{1}{|I|\,K(|I|)^{1/2}}\quad\forall\quad z\in Q_{I,\nu},
$$
as desired.
\end{proof}

\noindent $\bullet$ The sixth lemma is the following result.

\begin{lem}\label{packinglem}
Let $\{d_I\}_{I \in \HD}$ be a nonnegative sequence satisfying
\begin{align*}
\sup_{J \in \HD} \sum_{I \subseteq J} \frac{K\left( \frac{|I|}{|J|}\right)}{K(|I|)}d_I \leq 1.
\end{align*}
If
$E \subseteq \mathbb{T}$ is any arc with
\[
\mathcal{I}(E) = \bigl\{ I \in \mathcal{D} : Q_I^{\mathrm{up}} \cap Q_E \neq \emptyset \bigr\},
\]
then
\begin{align*}
\sum_{I \in \mathcal{I}(E)} \frac{K\left( \frac{|I|}{|E|}\right)}{K(|I|)}d_I \lesssim 1.
\end{align*}
\end{lem}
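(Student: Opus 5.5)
The plan is to reduce the sum over the non-dyadic family $\mathcal{I}(E)$ to a bounded number of sums of the form $\sum_{I\subseteq J}$ with $J\in\HD$. To each of these the hypothesis applies directly, once the weights $K(|I|/|E|)$ and $K(|I|/|J|)$ are compared via the doubling condition (d).

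\emph{Step 1 (geometry of $\mathcal{I}(E)$).} Suppose $Q_I^{\mathrm{up}}\cap Q_E\neq\emptyset$. Points of $Q_I^{\mathrm{up}}$ satisfy $1-|z|>|I|/2$, and points of $Q_E$ satisfy $1-|z|\le |E|$. Hence $|I|<2|E|$. Moreover, the radial projection of such a point lies in $I\cap E$, so $I\cap E\neq\emptyset$.

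\emph{Step 2 (covering by dyadic arcs).} If $|E|$ is comparable to $|\T|$, I take the single arc $J=\T$ (or the top-level dyadic arcs, of which there are boundedly many). Otherwise, I choose the dyadic generation whose length $\ell$ satisfies $2|E|\le \ell<4|E|$. Since $\ell\ge 2|E|>|E|$, the arc $E$ meets at most two (certainly at most three) dyadic arcs $J_1,\dots,J_m$ of this generation. Now let $I\in\mathcal{I}(E)$. By Step 1, $|I|<2|E|\le \ell$ and $I$ meets $E$, so $I$ meets some $J_k$. By the nestedness property of a dyadic system, $|I|\le|J_k|$ together with $I\cap J_k\neq\emptyset$ forces $I\subseteq J_k$. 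Thus $\mathcal{I}(E)\subseteq\bigcup_{k\le m}\{I\in\HD: I\subseteq J_k\}$, with $m\le 3$.

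\emph{Step 3 (comparing weights).} For $I\subseteq J_k$ we have $|I|/|E|\le 4|I|/|J_k|$. Since $K$ is non-decreasing, applying (d) twice gives $K(|I|/|E|)\le K(4|I|/|J_k|)\le C_K^2K(|I|/|J_k|)$. The arguments at which (d) is applied must be at most $1/2$. Where they are not, all quantities are $\lesssim 1$: I would either use the normalization $K(t)=K(1)$ for $t\ge1$, as in Lemma \ref{integralestK}, or simply note that $K(4t)\le K(4)\lesssim K(1/4)\lesssim K(t)$ for $t\in[1/8,1]$, using monotonicity and finitely many doublings.

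\emph{Step 4 (conclusion).} Combining Steps 2 and 3,
\[
\sum_{I\in\mathcal{I}(E)}\frac{K\left(\frac{|I|}{|E|}\right)}{K(|I|)}d_I\le C_K^2\sum_{k=1}^{m}\sum_{I\subseteq J_k}\frac{K\left(\frac{|I|}{|J_k|}\right)}{K(|I|)}d_I\le 3C_K^2 .
\]
The only delicate point is the bookkeeping in Step 2. One must choose the generation of the $J_k$ large enough, namely $\ell\ge 2|E|$, so that every $I\in\mathcal{I}(E)$ is captured by containment and no leftover large arcs need separate treatment. At the same time $\ell$ must remain $\lesssim|E|$, so that the doubling comparison in Step 3 costs only a bounded constant.
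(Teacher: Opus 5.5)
Your argument is correct, and it takes a genuinely different route from the paper's.

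\textbf{The paper's route.} The paper splits $\mathcal{I}(E)$ into the arcs with $I\subseteq E$ and the arcs with $I\not\subseteq E$.
\begin{itemize}
\item For $I\subseteq E$ it goes \emph{down}. It decomposes $E$ into its maximal dyadic subarcs $J\in M(E)$, which form two Whitney chains and a central arc, and uses $K(|I|/|E|)\le\varphi_K(|J|/|E|)\,K(|I|/|J|)$. The unboundedly many pieces are then summed via $\sum_j\varphi_K(2^{-j})\lesssim\int_0^1\varphi_K(s)\,ds/s<\infty$, which is the size condition (c).
\item For $I\not\subseteq E$ (the boundary arcs) it uses $d_I\le K(|I|)/K(1)$, which is the hypothesis with $J=I$. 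It then counts $O(1)$ arcs per scale near the endpoints of $E$ and invokes condition (c) again.
\end{itemize}

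\textbf{Your route.} You go \emph{up}. Covering $E$ by at most two dyadic arcs $J_k$ of length $\ell\in[2|E|,4|E|)$ captures every $I\in\mathcal{I}(E)$ by nestedness, including the paper's boundary arcs. So the hypothesis is applied only boundedly many times, and the weight comparison costs just a doubling constant.

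\textbf{Comparison.} Your proof is shorter and handles both of the paper's cases at once. It uses only monotonicity, positivity and (d), not (c). The paper's route has the feature, not needed here, of using the packing hypothesis only on dyadic arcs contained in $E$.

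\textbf{A correction to Step~3.} Condition (d) gives no control of $K$ above $1$, so $K(4)\lesssim K(1/4)$ does not follow from repeated doubling. It holds simply because $K(4)<\infty$ and $K(1/4)>0$. In fact you only need the top-scale case $t=|I|/|J_k|=1/2$, since dyadic nesting forces $t\le 1/2$. There the trivial bound $K(2)\le \tfrac{K(2)}{K(1/2)}K(1/2)$ suffices.
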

\begin{proof}
Since
$$
Q_I^{\mathrm{up}} \cap Q_E \neq \emptyset\Rightarrow |I| \lesssim |E|,
$$
we consider two cases.

\smallskip
\noindent\emph{Case 1: $I\subseteq E$.}
Let $M(E)$ denote the collection of maximal dyadic subintervals of $E$. The intervals in $M(E)$ are pairwise disjoint,
and every dyadic interval $I \subseteq E$ is contained in a unique $J \in M(E)$. Moreover,
\begin{align*}
\sum_{I \subseteq J} \frac{K\left( \frac{|I|}{|J|}\right)}{K(|I|)}d_I \leq 1.
\end{align*}
The maximal dyadic subintervals of $E$ form at most two Whitney chains and one central interval, as in \cite{HZ26}.
Hence
\begin{align*}
\sum_{I \subseteq E, I \in \mathcal{I}(E)} \frac{K\left( \frac{|I|}{|E|}\right)}{K(|I|)}d_I
&= \sum_{J \in M(E)} \sum_{I \subseteq J}\frac{K\left( \frac{|I|}{|E|}\right)}{K(|I|)}d_I\\
& = \sum_{J \in M(E)} \sum_{I \subseteq J}\frac{K\left( \frac{|J|}{|E|} \frac{|I|}{|J|}\right)}{K(|I|)}d_I\\
& \leq \sum_{J \in M(E)} \sum_{I \subseteq J} \frac{\varphi_K\left( \frac{|J|}{|E|}\right) K\left( \frac{|I|}{|J|}\right)}{K(|I|)}d_I \quad (\text{via}\  \varphi_K(s) = \sup_{0<t\le 1} \frac{K(st)}{K(t)}\ )\\
& \leq \sum_{J \in M(E)} \varphi_K\left( \frac{|J|}{|E|}\right) \quad (\text{ by assumption}\ )\\
& \lesssim \sum_{j=1}^\infty \varphi_K (2^{-j})\\
& \lesssim \int_{0}^{1} \frac{\varphi_K(s)}{s} ds < \infty.
\end{align*}

\smallskip
\noindent\emph{Case 2: $I\not\subseteq E$.}
At each dyadic scale, there are only $O(1)$ such intervals meeting one of the endpoints of $E$. By assumption, $d_I \leq K(|I|)$. Hence
\begin{align*}
\sum_{I \not\subseteq E, I \in \mathcal{I}(E)} \frac{K\left( \frac{|I|}{|E|}\right)}{K(|I|)}d_I
&\leq \sum_{I \not\subseteq E, I \in \mathcal{I}(E)} K\left( \frac{|I|}{|E|}\right)\\
& \lesssim \sum_{I \not\subseteq E, I \in \mathcal{I}(E)} \varphi_K \left( \frac{|I|}{|E|}\right)\\
& \lesssim \sum_{j=1}^\infty \varphi_K (2^{-j}) < \infty.
\end{align*}
\end{proof}

Finally, we complete the argument for necessity of the boundedness.

Assume
\begin{align*}
\int_\D |f'(z)|^q W(z) dA(z) \leq C_{best} \|f\|_{Q_{K,*}}^q\quad\forall\quad f \in Q_K
\end{align*}
where $C_{\mathrm{best}}$ is the best constant.
It suffices to show that for any non-negative sequence $\{d_I\}_{I\in\mathcal{D}}$ with
\begin{align*}
\sup_{J \in \HD} \sum_{I \subseteq J} \frac{K\left( \frac{|I|}{|J|}\right)}{K(|I|)}d_I \leq 1,
\end{align*}
one has
$$
\sum_{I \in \HD} d_I^{q/2} h_{W,q}^K(I) \lesssim C_{best},
\quad\text{where}\quad h_{W,q}^K(I) = \frac{\int_{Q_I^{\text{up}}} W(z)\,dA(z)}{(|I|^2 K(|I|))^{q/2}}.
$$

Let
$$\varepsilon = \{\varepsilon_{I,\nu}: I\in\mathcal{D}, 1\le\nu\le N_0\}$$
be independent Rademacher random variables. Suppose that
$$
\begin{cases}
G_\varepsilon(w) = \sum_{I\in\mathcal{D}} \sum_{\nu=1}^{N_0} \varepsilon_{I,\nu} d_I^{1/2} b_{I,\nu}(w)\quad \forall\quad w\in\mathbb{D};\\
\text{$b_{I,\nu} = \big({A_K(Q_{I,\nu})\big)^{-\frac12}} \one_{Q_{I,\nu}}$ is given by (\ref{defbinv}).}
\end{cases}
$$
By the disjointness of $\{Q_{I,\nu}\}$, we have
\[
|G_\varepsilon(w)|^2 = \sum_{I\in\mathcal{D}} \sum_{\nu=1}^{N_0} d_I |b_{I,\nu}(w)|^2\quad\forall\quad w\in\mathbb{D}.
\]
We claim that
$$
G_\varepsilon \in \HT_K
\quad\text{for every choice of signs $\varepsilon$}.
$$

For any arc $E \subseteq \T$, recall that
$$\mathcal{I}(E) = \bigl\{ I \in \mathcal{D} : Q_I^{\mathrm{up}} \cap Q_E \neq \emptyset \bigr\}.
$$
So we have
\begin{align*}
&\int_{Q_E} |G_\varepsilon(w)|^2 K\left( \frac{1-|w|}{|E|}\right)dA(w)\\
& \leq \sum_{I\in \mathcal{I}(E)} \sum_{\nu=1}^{N_0} d_I \int_{Q_{I,\nu}} |b_{I,\nu}(w)|^2 K\left( \frac{1-|w|}{|E|}\right)dA(w)\\
& = \sum_{I\in \mathcal{I}(E)} \sum_{\nu=1}^{N_0} \frac{d_I}{A_K(Q_{I,\nu})} \int_{Q_{I,\nu}} K\left( \frac{1-|w|}{|E|}\right)dA(w)\\
& \simeq \sum_{I\in \mathcal{I}(E)} \sum_{\nu=1}^{N_0} \frac{d_I}{|I|^2 K(|I|)} K\left( \frac{|I|}{|E|}\right) |I|^2\\
& \lesssim \sum_{I\in \mathcal{I}(E)} \frac{K\left( \frac{|I|}{|E|}\right)}{K(|I|)} d_I \\
& \lesssim 1 \quad (\text{ by Lemma \ref{packinglem}}\ ).
\end{align*}
Thus $\|G_\varepsilon\|_{\HT_K} \lesssim 1$. This, along with Lemma \ref{bddnessbergproj}, implies
$$\|B_b G_\varepsilon\|_{\HT_K} \lesssim \|G_\varepsilon\|_{\HT_K} \lesssim 1.
$$

Next let
\begin{align*}
F_\varepsilon(z): = \int_0^z B_b G_\varepsilon(\xi) d\xi.
\end{align*}
Then
$$
\begin{cases} F_\varepsilon \in Q_K;\\
F_\varepsilon(0) = 0;\\
\|F_\varepsilon\|_{Q_{K,*}} = \|B_b G_\varepsilon\|_{\HT_K} \lesssim 1.
\end{cases}
$$
Thus, by assumption, we have
\begin{align*}
\int_\D |B_b G_\varepsilon(z)|^q W(z) dA(z) = \int_\D |F_\varepsilon'(z)|^q W(z) dA(z) \lesssim C_{best}.
\end{align*}
Now Khinchin's inequality yields
\begin{align*}
\mathbb{E}_\varepsilon |B_b G_\varepsilon(z)|^q \simeq\left(\sum_{I\in\mathcal{D}} \sum_{\nu=1}^{N_0} d_I |B_b b_{I,\nu}(z)|^2\right)^{q/2}.
\end{align*}
Thus
\begin{align*}
C_{best}&= \mathbb{E}_\varepsilon (C_{best})\\
& \gtrsim \mathbb{E}_\varepsilon \int_\D |B_b G_\varepsilon(z)|^q W(z) dA(z)\\
& \gtrsim \int_\D \left(\sum_{I\in\mathcal{D}} \sum_{\nu=1}^{N_0} d_I |B_b b_{I,\nu}(z)|^2\right)^{q/2} W(z) dA(z)\\
& = \sum_{J\in \HD} \sum_{\widetilde{\nu}=1}^{N_0} \int_{Q_{J,\widetilde{\nu}}} \left(\sum_{I\in\mathcal{D}} \sum_{\nu=1}^{N_0} d_I |B_b b_{I,\nu}(z)|^2\right)^{q/2} W(z) dA(z)\\
& \geq \sum_{J\in \HD} \sum_{\widetilde{\nu}=1}^{N_0} \int_{Q_{J,\widetilde{\nu}}} d_J^{q/2} |B_b b_{J,\widetilde{\nu}}(z)|^q W(z) dA(z) \quad \text{(retaining only the summand $I=J$)}\\
& \gtrsim \sum_{J\in \HD} \sum_{\widetilde{\nu}=1}^{N_0} \frac{d_J^{q/2}}{(|J|^2K(|J|))^{q/2}}\int_{Q_{J,\widetilde{\nu}}}  W(z) dA(z) \  \text{(by Lemma \ref{whitneyest})}\\
& = \sum_{J\in \HD} \frac{d_J^{q/2}}{(|J|^2K(|J|))^{q/2}}\int_{Q_{J}^{up}}  W(z) dA(z)\\
& = \sum_{J\in \HD} d_J^{q/2} h_{W,q}^K(J).
\end{align*}
This completes the proof.

\section{Proof of Theorem~\ref{t1}: Compactness}\label{s3}
Recall that, for a complete metric space $X$, a bounded linear operator $T:Q_K\rightarrow X$ is compact provided that $\|Tf_n\|_X\rightarrow0$ for every bounded sequence $\{f_n\}\subseteq Q_K$ converging to $0$ uniformly on compact subsets of $\D$. We divide the proof into two parts.

\subsection{Proof of Theorem~\ref{t1}: Sufficiency for compactness}

Suppose that
$$
\frac{d}{dz} : Q_K \to L^q(W\,dA)
\  \text{
is bounded}
$$
and
\begin{equation}\label{31}
\lim_{\rho\to 1^-} \sup_{\mathcal{D}} \big\|h_{W_\rho, q}^K\big\|_{\mathcal{B}_K^q(\mathcal{D})} = 0.
\end{equation}
Let $\{f_n\} \subseteq Q_K$ be a bounded sequence such that $f_n \rightarrow 0$ uniformly on compact subsets of $\D$. Then $f_n'$ also converges to $0$ uniformly on compact subsets of $\D$.

Given $\varepsilon > 0$, choose $0<\rho <1$ such that $$\sup_{\mathcal{D}} \big\|h_{W_\rho, q}^K\big\|_{\mathcal{B}_K^q(\mathcal{D})} < \varepsilon.
$$
Then, by Theorem~\ref{t1}(i), we obtain
\begin{align*}
\int_{|z|>\rho} |f_n'(z)|^q W(z) dA(z) = \int_{\D} |f_n'(z)|^q W_\rho(z) dA(z) \lesssim \varepsilon \|f_n\|_{Q_{K,*}}^q \lesssim \varepsilon.
\end{align*}
Since $f_n' \rightarrow 0$ uniformly on compact subsets of $\D$, there exists $N$ such that, for $n > N$,
\begin{align*}
\int_{|z|\leq \rho} |f_n'(z)|^q W(z) dA(z) \leq \varepsilon \int_{|z|\leq \rho} W(z) dA(z) \lesssim \varepsilon,
\end{align*}
it follows that
\begin{align*}
\lim_{n \rightarrow \infty}\int_{\D} |f_n'(z)|^q W(z) dA(z) = 0,
\end{align*}
and hence
$$
\frac{d}{dz} : Q_K \to L^q(W\,dA)\  \text{is compact.}
$$

\subsection{Proof of Theorem~\ref{t1}: Necessity for compactness}

Suppose that $$\frac{d}{dz} : Q_K \to L^q(W\,dA)\  \text{is compact.}
$$ Then this operator is bounded. Consequently, we need to show that \eqref{31} holds. We prove it by contradiction. Suppose that \eqref{31} fails. Then there are $\rho_n \rightarrow 1$, dyadic systems $\HD_n$, and $c_0 > 0$ such that
$$\big\|h_{W_{\rho_n}, q}^K\big\|_{\mathcal{B}_K^q(\mathcal{D}_n)} \geq c_0.$$
Thus for each $n \geq 1$, there is a finitely supported nonnegative sequence $\{d_{I,n}\}_{I \in \HD_n}$ such that
$$\sup_{J \in \HD_n} \sum_{I \subseteq J} \frac{K\left(\frac{|I|}{|J|}\right)}{K(|I|)}d_{I,n}  \leq 1$$
and
\begin{align}\label{sumcondicontr}
\sum_{I \in \HD_n} d_{I,n}^{q/2} h_{W_{\rho_n}, q}^K(I) \geq \frac{c_0}{2}.
\end{align}
Since the support of $W_{\rho_n}$ is $\{|z| \geq \rho_n\}$, we may assume that
$$d_{I,n} = 0\ \text{whenever}\  Q_I^{\mathrm{up}} \cap \{ |z| > \rho_n \} = \emptyset.
$$

Let
$$\varepsilon = \{\varepsilon_{I,\nu}: I\in\mathcal{D}_n, 1\le\nu\le N_0\}$$
be independent Rademacher random variables. If
$$
\begin{cases}
G_{\varepsilon,n}(w) = \sum_{I\in\mathcal{D}_n} \sum_{\nu=1}^{N_0} \varepsilon_{I,\nu} d_{I,n}^{1/2} b_{I,\nu}(w)\ \forall\  w\in\mathbb{D};\\
F_{\varepsilon,n}(z)= \int_0^z B_b G_{\varepsilon,n}(\xi) d\xi\ \forall\ z\in\mathbb D;\\
b_{I,\nu} = {\big(A_K(Q_{I,\nu})\big)^{-\frac12}} \one_{Q_{I,\nu}}\  \text{ as given in (\ref{defbinv}),}
\end{cases}
$$
then, by the same argument as in the proof of Theorem~\ref{t1}(i), we have
\begin{align}\label{normestcont}
\|F_{\varepsilon,n}\|_{Q_{K,*}} = \|B_b G_{\varepsilon,n}\|_{\HT_K} \lesssim \|G_{\varepsilon,n}\|_{\HT_K} \lesssim 1,
\end{align}
and
\begin{align*}
\mathbb{E}_\varepsilon \int_\D |F_{\varepsilon,n}'(z)|^2 W_{\rho_n}(z) dA(z)& \gtrsim \sum_{I\in \HD_n} d_{I,n}^{q/2} h_{W_{\rho_n},q}^K(I) \geq \frac{c_0}{2} \quad \text{(by (\ref{sumcondicontr}))}.
\end{align*}
Thus for each $n$, there is a choice of signs $\varepsilon_n$ such that
\begin{align*}
\int_\D |F_{\varepsilon_n,n}'(z)|^2 W_{\rho_n}(z) dA(z) \gtrsim 1.
\end{align*}

Now let
$$
F_n(z) = F_{\varepsilon_n,n}(z).
$$ Then
\begin{equation}\label{contradineq}
\begin{cases} F_n(0) = 0;\\
 \|F_n\|_{Q_{K,*}} \lesssim 1;\\
\int_\D |F_{n}'(z)|^2 W(z) dA(z) \geq \int_\D |F_{n}'(z)|^2 W_{\rho_n}(z) dA(z) \gtrsim 1.
\end{cases}
\end{equation}
We claim that $F_n \rightarrow 0$ uniformly on compact subsets of $\D$. If the claim holds, then (\ref{contradineq}) contradicts the assumption that $$\frac{d}{dz} : Q_K \to L^q(W\,dA)\ \text{ is compact.}
$$
It remains to prove the claim.

Let $$G_n(z) = G_{\varepsilon_n,n}(z).
$$
Then $$F_n'(z) = B_b G_n(z).
$$
Moreover, if $$Q_I^{\mathrm{up}} \cap \{ |z| > \rho_n \} \neq \emptyset,$$
 then
\begin{equation*}
\exists\ \text{an absolute constant $C_0 \geq 1$ such that}\
Q_I^{\mathrm{up}} \subset \big\{ |z| > 1 - C_0(1 - \rho_n) \big\}.
\end{equation*}
Since we have assumed that
$$\text{$d_{I,n} = 0$ whenever $Q_I^{\mathrm{up}} \cap \{ |z| > \rho_n \} = \emptyset$,}
$$
we have
\begin{equation*}
\operatorname{supp} G_n \subset \big\{ |z| > 1 - C_0(1 - \rho_n) \big\}.
\end{equation*}
By (\ref{normestcont}), we have $\|G_n\|_{\HT_K} \lesssim 1$. In particular,
$$
\int_{\mathbb{D}} |G_n(w)|^2 K(1-|w|)\,dA(w) \lesssim 1.
$$

Let $V \subseteq \D$ be any compact set. For $z \in V$, the kernel $\frac{1}{(1-z\overline{w})^{b+2}}$ is uniformly bounded. Thus
\begin{align*}
|F_n'(z)|^2 &= |B_bG_n(z)|^2 \\
&= \left| \int_{\mathbb{D}} \frac{G_n(w)}{\big(1 - z \overline{w}\big)^{b+2}} dA_b(w)\right|^2\\
& \lesssim \left(\int_{\{|w| > 1 - C_0(1 - \rho_n)\}} |G_n(w)| (1-|w|^2)^b dA(w)\right)^2\\
& \leq  \int_{\mathbb{D}} |G_n(w)|^2 K(1-|w|)\,dA(w) \int_{\{|w| > 1 - C_0(1 - \rho_n)\}} \frac{(1-|w|^2)^{2b}}{K(1-|w|)}dA(w)\\
& \lesssim \int_0^{C_0(1 - \rho_n)} \frac{t^{2b}}{K(t)}dt.
\end{align*}
The last integral tends to $0$ as $n \rightarrow \infty$, since (\ref{estimateintegraltabkt}) gives
$$
\int_0^{1} \frac{t^{2b}}{K(t)}dt < \infty.
$$
Thus $F_n' \rightarrow 0$ uniformly on compact subsets of $\D$. Since $F_n(0) = 0$, we also have $F_n \rightarrow 0$ uniformly on compact subsets of $\D$.
The proof is complete.

\medskip

\section{Proof of Theorem \ref{20260726thm01}} \label{sec04}

It is clear that (ii) implies (i). Suppose that (i) holds. Since $Q_K$ is M\"obius invariant, $\sup_{a\in\D}\|\varphi_a\|_{Q_K}<\infty$, where $\varphi_a(z):=\frac{a-z}{1-\bar{a}z}$ is the standard disc automorphism associated to $a \in \D$.
Therefore,
$$
\sup_{a\in\D}
\|\varphi_a'\|_{L^\infty(W\,dA)}
\lesssim 1.
$$
Let $\{a_j\}_{j=1}^\infty$ be a countable dense subset of $\D$. Outside a $W\,dA$-measure zero set, we have $|\varphi_{a_j}'(z)|\lesssim 1$ for every $j\ge 1$. Since $|\varphi_a'(z)|=\frac{1-|a|^2}{|1-\overline{a}z|^2}$ depends continuously on $a$, it follows that, for $W\,dA$-almost every
$z\in\D$,
$$
\frac{1}{1-|z|^2}=\sup_{a\in\D}|\varphi_a'(z)|
=\sup_{j\ge 1}|\varphi_{a_j}'(z)| \lesssim 1.
$$
Consequently, the essential support of $W\,dA$ is contained in a disk
$\{z\in\D:|z|\le r\}$ for some $0<r<1$. Equivalently,
$$
W(z)=0
$$
for almost every $z\in\D$ with $|z|>r$. Thus (i) implies (iii).

It remains to prove that (iii) implies (ii). Suppose that
(iii) holds for some $0<r<1$. Since $Q_K$ is continuously contained in the Bloch space,
$$
|f'(z)| \lesssim \frac{\|f\|_{Q_K}}{1-|z|^2}.
$$
Hence
$$
\|f'\|_{L^\infty(W\,dA)} \le \sup_{|z|\le r}|f'(z)|
\lesssim_r \|f\|_{Q_K},
$$
which implies $d/dz$ is bounded.

\vspace{0.1cm}

To prove compactness, let $\{f_n\}_{n \ge 1}$ be a bounded sequence in $Q_K$. The
continuous inclusion $Q_K\subset\mathcal B$ shows that $\{f_n\}_{n \ge 1}$ is a normal
family. Thus, after passing to a subsequence, there exists an analytic
function $f$ on $\D$ such that $f_n\to f$, uniformly on compact subsets of $\D$. It follows that $f_n'\to f'$, uniformly on compact subsets of $\D$. In particular,
$$
\sup_{|z|\le r}|f_n'(z)-f'(z)|
\longrightarrow 0.
$$
Since $W=0$ almost everywhere on $\{z\in\D:|z|>r\}$, we obtain
$$
\|f_n'-f'\|_{L^\infty(W\,dA)}
\le
\sup_{|z|\le r}|f_n'(z)-f'(z)|
\longrightarrow 0.
$$
Therefore, $d/dz$ is compact, and the proof is
complete.

\section{Proof of Theorem \ref{20260728thm02}} \label{Sec05}

We shall use the following logarithmic version of Khinchine's inequality for
Steinhaus random variables. Recall that Steinhaus variables are the complex
counterparts of the Rademacher functions: they form an i.i.d. sequence
$\{\zeta_j\}$ of $\T$-valued random variables defined on a probability space
$(\Omega,P)$, each uniformly distributed on $\T$.

\begin{lem}\label{logkhinchine}
Let $\zeta_0,\ldots,\zeta_N$ be independent Steinhaus random variables. Then,
for every $c_0,\ldots,c_N\in\C$, not all zero,
$$
\mathbb{E}_{\zeta}
\log\left|\sum_{j=0}^N c_j\zeta_j\right|
\geq
\frac{1}{2}\log\left(\sum_{j=0}^N|c_j|^2\right)
-\frac{1}{2}\log 2.
$$
\end{lem}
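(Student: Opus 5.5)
We need to prove: for independent Steinhaus variables, $\mathbb E\log|\sum c_j\zeta_j|\ge \tfrac12\log\sum|c_j|^2-\tfrac12\log 2$.

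The plan is to induct on the number of nonzero coefficients, exploiting the fact that $\log|a+b\zeta|$ averaged over a single uniform $\zeta\in\T$ is explicitly computable by Jensen's formula. Let $S=\sum_{j=0}^N c_j\zeta_j$ and normalize $\sum|c_j|^2=1$ (both sides shift by the same constant under scaling). Condition on $\zeta_1,\dots,\zeta_N$ and write $S=c_0\zeta_0+R$ with $R=\sum_{j\ge1}c_j\zeta_j$ independent of $\zeta_0$. For fixed complex numbers $a,b$, Jensen's formula (or the mean value property of $\log|a+bz|$, harmonic or subharmonic on the disc) gives
\begin{equation*}
\int_{\T}\log|a+b\zeta|\,\frac{|d\zeta|}{2\pi}=\log\max\{|a|,|b|\}.
\end{equation*}
Hence
\begin{equation*}
\mathbb E\log|S|=\mathbb E_{\zeta_1,\dots,\zeta_N}\log\max\{|c_0|,|R|\}.
\end{equation*}
Iterating, one gets the cleaner formula $\mathbb E\log|S|=\mathbb E\log\max\{|c_0|,|R|\}$, though a closed form is not needed.

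The key step is a direct lower bound rather than a full induction. Order so that $|c_0|=\max_j|c_j|$. The goal is $\mathbb E\log\max\{|c_0|,|R|\}\ge\tfrac12\log\tfrac12$. If $|c_0|^2\ge\tfrac12$, this is immediate since $\max\ge|c_0|$. Otherwise all coefficients are small, and I would use a second route: $\log\max\{|c_0|,|R|\}\ge\log|R|$ together with an inductive hypothesis applied to $R$. That alone loses mass, so instead I would proceed by pairing. Split the indices into two groups $A,B$ with $\sum_A|c_j|^2$ and $\sum_B|c_j|^2$ each at least roughly comparable, write $S=X+Y$ with $X,Y$ independent, and note that the rotation invariance of $Y$ (multiply by an independent Steinhaus $\zeta$, which does not change the law) gives
\begin{equation*}
\mathbb E\log|X+Y|=\mathbb E\log\max\{|X|,|Y|\}\ge\tfrac12\,\mathbb E\log\bigl(\tfrac{|X|^2+|Y|^2}{2}\bigr)\cdot 1+\ldots,
\end{equation*}
using $\max\{x,y\}^2\ge\tfrac12(x^2+y^2)$. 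This reduces the problem to lower-bounding $\mathbb E\log(|X|^2+|Y|^2)$.

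I expect this reduction to be the main obstacle, since the constant must come out exactly $\tfrac12\log2$ without accumulating losses over the induction. My first attempt is therefore an induction on $N$ with the stronger hypothesis $\mathbb E\log|S|\ge\tfrac12\log\sum|c_j|^2-\tfrac12\log2$, applying the identity $\mathbb E\log|S|=\mathbb E\log\max\{|c_0|,|R|\}$ only once at the top. I then bound $\log\max\{|c_0|,|R|\}\ge\tfrac12\log(\tfrac12(|c_0|^2+|R|^2))$ and use concavity-free facts: $\mathbb E|R|^2=\sum_{j\ge1}|c_j|^2$, together with the elementary inequality that, for the specific function $t\mapsto\log(|c_0|^2+t)$, the expectation over $|R|^2$ is minimized in the extremal case. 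If this fails to close, the fallback is the known sharp fact that $\mathbb E\log|S|\ge\tfrac12\log\sum|c_j|^2-\gamma/2$, where $\gamma$ is Euler's constant, obtained as the Gaussian limit. This holds because $\log$ of a sum of independent rotation-invariant terms has expectation minimized by the complex Gaussian, for which $\mathbb E\log|g|=\tfrac12(\log\sigma^2-\gamma)$. Since $\gamma/2<\tfrac12\log2$, this would prove the lemma with room to spare.
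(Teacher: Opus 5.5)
Your Jensen identity $\mathbb{E}\log|S|=\mathbb{E}\log\max\{|c_0|,|R|\}$ is correct. It disposes of the case $\max_j|c_j|^2\ge\frac12\sum_j|c_j|^2$. The remaining case, with the mass spread over several coefficients, is where all the content of the lemma lies, and neither of your routes handles it.

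The pairing route cannot work. After
\[
\mathbb{E}\log|X+Y|=\mathbb{E}\log\max\{|X|,|Y|\}\ge\tfrac12\mathbb{E}\log\tfrac{|X|^2+|Y|^2}{2},
\]
you would need $\mathbb{E}\log(|X|^2+|Y|^2)\ge\log\mathbb{E}(|X|^2+|Y|^2)$. That is the reverse of Jensen's inequality for the concave logarithm, and it fails whenever $|X|^2+|Y|^2$ is not a.s.\ constant. Quantitatively, take $\sum_j|c_j|^2=1$ with many small coefficients split evenly. Then $X$ and $Y$ are independent and nearly complex Gaussian, so $|X|^2+|Y|^2$ is nearly a sum of two independent exponential variables of mean $\tfrac12$. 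The intermediate quantity $\frac12\mathbb{E}\log\frac{|X|^2+|Y|^2}{2}$ is then about $\frac12(1-\gamma)-\log2\approx-0.48$, below the target $-\frac12\log2\approx-0.35$. Since the chain already sits below the target at this step, no later estimate can rescue it. The same concavity objection applies to your ``elementary inequality'' for $t\mapsto\log(|c_0|^2+t)$: its expectation over $|R|^2$ is bounded above, not below, by its value at the mean.

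The fallback is false: the Gaussian does not minimize $\mathbb{E}\log|S|$ at fixed $\sum_j|c_j|^2$. Take $c_0=c_1=1$ and all other $c_j=0$. Your own Jensen identity gives $\mathbb{E}\log|\zeta_0+\zeta_1|=\log\max\{1,1\}=0$. Your claimed bound would instead require this to be at least $\frac12\log2-\frac{\gamma}{2}\approx0.058>0$. This two-term example gives equality in the lemma, so the constant $\frac12\log2$ is sharp and cannot be improved to $\gamma/2$. It is also why the sharp Steinhaus Khinchine constants $a_p$ tend to $1/\sqrt2$ as $p\to0^+$, rather than to the Gaussian value $e^{-\gamma/2}$.

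The missing ingredient is a sharp small-$p$ Khinchine estimate, which the paper imports rather than proves. König's sharp Khinchine inequality for Steinhaus variables gives $a_p(\sum_j|c_j|^2)^{1/2}\le(\mathbb{E}|S|^p)^{1/p}$ for $0<p<1$, with $a_p\to1/\sqrt2$. Letting $p\to0^+$ and using $(\mathbb{E}|S|^p)^{1/p}\to\exp(\mathbb{E}\log|S|)$ for the bounded variable $|S|$ yields the lemma.
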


\begin{proof}
By the sharp Khinchine inequality for Steinhaus variables
\cite[Theorem 1, Proposition 4]{Konig2014}, for $0<p<1$,
\begin{equation}\label{20260728eq37}
a_p\left(\sum_{j=0}^N|c_j|^2\right)^{1/2}
\leq
\left(
\mathbb{E}_{\zeta}
\left|\sum_{j=0}^Nc_j\zeta_j\right|^p
\right)^{1/p},
\end{equation}
where
$$
\lim_{p\to0^+}a_p=\frac{1}{\sqrt{2}}.
$$
Arguing as in \eqref{20260728eq38}--\eqref{20260728eq40} and then letting
$p\to0^+$ in \eqref{20260728eq37}, we deduce that
$$
\exp\left(
\mathbb{E}_{\zeta}
\log\left|\sum_{j=0}^Nc_j\zeta_j\right|
\right)
\geq
\frac{1}{\sqrt{2}}
\left(\sum_{j=0}^N|c_j|^2\right)^{1/2}.
$$
Taking logarithms on both sides completes the proof.
\end{proof}

\begin{proof}[Proof of Theorem~\ref{20260728thm02}]
We prove
$$
{\rm (iii)}\Longrightarrow{\rm (i)}
\Longrightarrow{\rm (ii)}
\Longrightarrow{\rm (iii)}.
$$

We first prove that {\rm (iii)} implies {\rm (i)}. Without loss of generality,
we may assume that $f$ is nonconstant, since otherwise the assertion is
trivial. Since $Q_K\subseteq\mathcal B$ under assumptions {\rm (a)--(d)}, we
have
$$
|f'(z)|
\lesssim
\frac{\|f\|_{Q_{K,*}}}{1-|z|^2},
\qquad z\in\D.
$$
Therefore, for every nonconstant $f\in Q_K$,
$$
\log|f'(z)|
\leq
\log\bigl(C_K\|f\|_{Q_{K,*}}\bigr)
+\log\frac{1}{1-|z|^2}
$$
for some constant $C_K>0$. By {\rm (iii)},
$$
\int_{\D}W(z)\log\frac{e}{1-|z|^2}\,dA(z)<\infty,
$$
and hence
\begin{align*}
\log M_{0,W}(f')
&=
\frac{1}{W(\D)}
\int_{\D}W(z)\log|f'(z)|\,dA(z)\\
&\leq
\log\bigl(C_K\|f\|_{Q_{K,*}}\bigr)
+
\frac{1}{W(\D)}
\int_{\D}W(z)\log\frac{1}{1-|z|^2}\,dA(z).
\end{align*}
It follows that
$$
M_{0,W}(f')
\leq
C_K
\exp\left(
\frac{1}{W(\D)}
\int_{\D}W(z)\log\frac{1}{1-|z|^2}\,dA(z)
\right)
\|f\|_{Q_{K,*}},
$$
which proves {\rm (i)}.

Since $Q_{K,0}\subseteq Q_K$, the implication
${\rm (i)}\Longrightarrow{\rm (ii)}$ is immediate.

\medskip

It remains to prove that {\rm (ii)} implies {\rm (iii)}. Suppose that
\begin{equation}\label{20260728eq01}
M_{0,W}(f')
\leq C\|f\|_{Q_{K,*}},
\qquad f\in Q_{K,0}.
\end{equation}
Fix a dyadic system $\HD$ on $\T$, and write
$$
\HD_n:=\{I\in\HD:|I|=2^{-n}\},
\qquad
\HD^{(N)}:=\bigcup_{n=0}^N\HD_n.
$$
For $I\in\HD$, define
$$
d_I^{(N)}
:=
|I|^{3/2}K(|I|)\one_{\HD^{(N)}}(I).
$$
We claim that
\begin{equation}\label{endpointpacking}
\sup_{J\in\HD}
\sum_{I\subseteq J}
\frac{K\left(\frac{|I|}{|J|}\right)}{K(|I|)}
d_I^{(N)}
\lesssim 1.
\end{equation}
Indeed, for every $J\in\HD$, using the assumption that $K$ is non-decreasing,
\begin{align*}
\sum_{I\subseteq J}
\frac{K\left(\frac{|I|}{|J|}\right)}{K(|I|)}
d_I^{(N)}
&\lesssim
\sum_{k=0}^{\infty}
\sum_{\substack{I\subseteq J\\ |I|=2^{-k}|J|}}
|I|^{3/2}K\left(\frac{|I|}{|J|}\right)\\
&=
\sum_{k=0}^{\infty}
\sum_{\substack{I\subseteq J\\ |I|=2^{-k}|J|}}
|I|^{3/2}K(2^{-k})\\
&\lesssim
|J|^{3/2}
\sum_{k=0}^{\infty}2^{-k/2}K(2^{-k})\\
&\lesssim
K(1)\sum_{k=0}^{\infty}2^{-k/2}.
\end{align*}

Let
$$
\left\{
\zeta_0,\zeta_{I,\nu}:
I\in\HD^{(N)},\ 1\leq\nu\leq N_0
\right\}
$$
be independent Steinhaus random variables, and define the truncated test
function
$$
G_{\zeta,N}(w)
:=
\sum_{I\in\HD^{(N)}}\sum_{\nu=1}^{N_0}
\zeta_{I,\nu}\bigl(d_I^{(N)}\bigr)^{1/2}
b_{I,\nu}(w).
$$
Since the sets $Q_{I,\nu}$ are pairwise disjoint,
$$
|G_{\zeta,N}(w)|^2
=
\sum_{I\in\HD^{(N)}}\sum_{\nu=1}^{N_0}
d_I^{(N)}|b_{I,\nu}(w)|^2.
$$
By \eqref{endpointpacking}, Lemma~\ref{packinglem}, and the same calculation
used in the necessity part of Theorem~\ref{t1},
$$
\|G_{\zeta,N}\|_{\HT_K}\lesssim1,
$$
uniformly in $N$ and $\zeta$. Lemma~\ref{bddnessbergproj} then gives
$$
\|B_bG_{\zeta,N}\|_{\HT_K}\lesssim1.
$$

Set\footnote{The auxiliary term $\zeta_0z$ is included both to justify the
later application of Fubini's theorem and to ensure the global lower bound
$\mathbb E_{\zeta}\log|F_{\zeta,N}'(z)|\geq0$; see
\eqref{20260728eq30}.}
$$
F_{\zeta,N}(z)
:=
\zeta_0z+\int_0^zB_bG_{\zeta,N}(\xi)\,d\xi.
$$
Then
$$
F_{\zeta,N}'(z)
=
\zeta_0+
\sum_{I\in\HD^{(N)}}\sum_{\nu=1}^{N_0}
\zeta_{I,\nu}\bigl(d_I^{(N)}\bigr)^{1/2}
B_bb_{I,\nu}(z),
$$
and
$$
\|F_{\zeta,N}\|_{Q_{K,*}}
\leq
\|z\|_{Q_{K,*}}
+\|B_bG_{\zeta,N}\|_{\HT_K}
\lesssim1.
$$

We next verify that the truncated test functions belong to $Q_{K,0}$. For
each fixed $N$, the function $G_{\zeta,N}$ is supported in a compact subset
of $\D$. Consequently, $B_bG_{\zeta,N}$, and hence $F_{\zeta,N}$, extends
analytically to a neighborhood of $\overline{\D}$. If $P_{m, N}$ denotes the
$m$-th Taylor polynomial of $F_{\zeta,N}$, then
$$
\sup_{z\in\D}|P_{m, N}'(z)-F_{\zeta,N}'(z)|\longrightarrow0.
$$
Since $K$ is non-decreasing,
\begin{align*}
\|P_{m, N}-F_{\zeta,N}\|_{Q_{K,*}}^2
&\lesssim
\sup_{a\in\D} \int_{\D} |P_{m, N}'(z)-F_{\zeta,N}'(z)|^2 K\bigl(1-|\sigma_a(z)|^2\bigr)\,dA(z)\\
&\lesssim K(1) \sup_{z\in\D}|P_{m, N}'(z)-F_{\zeta,N}'(z)|^2 \longrightarrow 0.
\end{align*}
Thus $F_{\zeta,N}$ belongs to the closure of the analytic polynomials in the
$Q_K$-norm, namely, $F_{\zeta,N}\in Q_{K,0}$.

The assumption \eqref{20260728eq01} now yields
\begin{equation}\label{20260728eq05}
\frac{1}{W(\D)}
\int_{\D}W(z)\log|F_{\zeta,N}'(z)|\,dA(z)
\lesssim1.
\end{equation}

For each fixed $z\in\D$, we apply Lemma~\ref{logkhinchine} to
$F_{\zeta,N}'(z)$, with coefficients
$$
c_0=1
\qquad\text{and}\qquad
c_{I,\nu}
:=
\bigl(d_I^{(N)}\bigr)^{1/2}B_bb_{I,\nu}(z).
$$
We obtain
\begin{equation}\label{20260728eq04}
\mathbb E_{\zeta}\log|F_{\zeta,N}'(z)|
\geq
\frac12
\log\left(
1+
\sum_{I\in\HD^{(N)}}\sum_{\nu=1}^{N_0}
d_I^{(N)}|B_bb_{I,\nu}(z)|^2
\right)
-\frac12\log2.
\end{equation}

We also have
\begin{equation}\label{20260728eq30}
\mathbb E_{\zeta}\log|F_{\zeta,N}'(z)|\geq0,
\qquad z\in\D.
\end{equation}
Indeed, fix $z\in\D$ and all the variables $\zeta_{I,\nu}$. Since the
Steinhaus variable $\zeta_0$ is uniformly distributed on $\T$, Jensen's
formula gives
$$
\begin{aligned}
\mathbb E_{\zeta_0}\log|F_{\zeta,N}'(z)|
&=
\frac{1}{2\pi}\int_0^{2\pi}
\log\left|
e^{i\theta}
+
\sum_{I\in\HD^{(N)}}\sum_{\nu=1}^{N_0}
\zeta_{I,\nu}\bigl(d_I^{(N)}\bigr)^{1/2}
B_bb_{I,\nu}(z)
\right|\,d\theta\\
&=
\log\max\left\{
1,
\left|
\sum_{I\in\HD^{(N)}}\sum_{\nu=1}^{N_0}
\zeta_{I,\nu}\bigl(d_I^{(N)}\bigr)^{1/2}
B_bb_{I,\nu}(z)
\right|
\right\}
\geq0.
\end{aligned}
$$
Averaging over the remaining Steinhaus variables proves
\eqref{20260728eq30}.

\medskip

Fix
$$
z\in\bigcup_{I\in\HD^{(N)}}Q_I^{\mathrm{up}}.
$$
Then $z\in Q_{I_z,\nu}\subseteq Q_{I_z}^{\mathrm{up}}$ for some
$I_z\in\HD^{(N)}$ and $1\leq\nu\leq N_0$. By Lemma~\ref{whitneyest},
$$
|B_bb_{I_z,\nu}(z)|^2
\gtrsim
\frac{1}{|I_z|^2K(|I_z|)}.
$$
Since
$$
d_{I_z}^{(N)}
=
|I_z|^{3/2}K(|I_z|),
$$
it follows that
$$
d_{I_z}^{(N)}|B_bb_{I_z,\nu}(z)|^2
\gtrsim
|I_z|^{3/2}K(|I_z|)
\frac{1}{|I_z|^2K(|I_z|)}
=
|I_z|^{-1/2}.
$$
Combining this estimate with \eqref{20260728eq04}, we obtain
\begin{align*}
\mathbb E_{\zeta}\log|F_{\zeta,N}'(z)|
&\geq
\frac12\log\left(
1+d_{I_z}^{(N)}|B_bb_{I_z,\nu}(z)|^2
\right)
-\frac12\log2\\
&\geq
\frac12\log\left(1+c_0|I_z|^{-1/2}\right)
-\frac12\log2\\
&\geq
c_1\log\frac{e}{|I_z|}-C_1.
\end{align*}
Since $1-|z|^2\simeq|I_z|$ on $Q_{I_z}^{\mathrm{up}}$, we conclude that
\begin{equation}\label{20260728eq31}
\mathbb E_{\zeta}\log|F_{\zeta,N}'(z)|
\geq
c_2\log\frac{e}{1-|z|^2}-C_2,
\qquad
z\in\bigcup_{I\in\HD^{(N)}}Q_I^{\mathrm{up}}.
\end{equation}
Here, the constants $c_0,c_1,c_2,C_1,C_2>0$ are independent of $N$.

\medskip

Finally, taking expectations on both sides of \eqref{20260728eq05}, applying
Fubini's theorem, and using \eqref{20260728eq30} and
\eqref{20260728eq31}, we obtain
\begin{align*}
W(\D)
&=
\mathbb E_{\zeta}W(\D)\\
&\gtrsim
\int_{\D}
W(z)\mathbb E_{\zeta}\log|F_{\zeta,N}'(z)|\,dA(z)\\
&\geq
\int_{\bigcup_{I\in\HD^{(N)}}Q_I^{\mathrm{up}}}
W(z)\mathbb E_{\zeta}\log|F_{\zeta,N}'(z)|\,dA(z)\\
&\geq
c_2
\int_{\bigcup_{I\in\HD^{(N)}}Q_I^{\mathrm{up}}}
W(z)\log\frac{e}{1-|z|^2}\,dA(z)
-C_2W(\D).
\end{align*}
Therefore,
$$
\sup_{N\geq0}
\int_{\bigcup_{I\in\HD^{(N)}}Q_I^{\mathrm{up}}}
W(z)\log\frac{e}{1-|z|^2}\,dA(z)
<\infty.
$$
Letting $N\to\infty$ proves {\rm (iii)} and completes the proof.
\end{proof}

\end{document}